\documentclass[11pt,oneside]{amsart}
\usepackage[top=25mm, bottom=25mm, left=25mm, right=25mm]{geometry}
\usepackage{amsfonts,amssymb,amsthm,amsxtra}
\usepackage[T1]{fontenc}
\usepackage[utf8]{inputenc}
\usepackage{bm}
\usepackage{mathtools}
\usepackage{dsfont}
\usepackage{bbm}
\usepackage{mathrsfs}
\usepackage{enumitem}
\usepackage{mathabx}
\usepackage{graphics}

\usepackage{graphics}

\usepackage{color}

\numberwithin{equation}{section}
\newtheorem{theorem}[equation]{Theorem}
\newtheorem{corollary}[equation]{Corollary}

\newtheorem{proposition}[equation]{Proposition}
\newtheorem{fact}[equation]{Fact}
\theoremstyle{definition}

\newtheorem{definition}[equation]{Definition}

\newtheorem{conj}[equation]{Conjecture}

\newcommand{\DD}{\mathbb{D}}

\newcommand{\II}{\mathbb{I}}

\newcommand{\NN}{\mathbb{N}}

\newcommand{\RR}{\mathbb{R}}

\newcommand{\ZZ}{\mathbb{Z}}

\newcommand{\calC}{\mathcal{C}}

\newcommand{\calF}{\mathcal{F}}

\newcommand{\calM}{\mathcal{M}}

\newcommand{\calH}{\mathcal{H}}

\newcommand{\calS}{\mathcal{S}}
\newcommand{\calD}{\mathcal{D}}

 \newcommand{\sgn}{\operatorname{sgn}}

\newcommand*{\DMO}[1]{\expandafter\DeclareMathOperator\csname #1\endcsname {#1}}
\DMO{ord}
\DMO{supp}
\DMO{Sym}
\DMO{cl}
\DMO{lcm}
\DMO{dist}
\DMO{tr}
\DMO{diam}

\DeclarePairedDelimiter\abs{\lvert}{\rvert}

\DeclarePairedDelimiter\norm{\lVert}{\rVert}

\DeclarePairedDelimiterX\spr[2]{\langle}{\rangle}{#1,#2}

\DeclarePairedDelimiterX\Set[2]{\{}{\}}{#1\colon #2}
\DeclarePairedDelimiterX\Seq[1]{(}{)}{#1}

\begin{document}
\title{Oscillation inequalities for Carleson--Dunkl operator}

\author{Wojciech S{\l}omian}
\address[Wojciech S{\l}omian]{
	  Instytut Matematyczny,
        Uniwersytet Wroc{\l}awski,
        Plac Grunwaldzki 2,
        50-384 Wroc{\l}aw,
        Poland}
    \email{wojciech.slomian@uwr.edu.pl}

\subjclass[2020]{42A38 (Primary), 42B25, 42B10}
\keywords{oscillation seminorm, Dunkl transform, radial functions, partial sums}   
\thanks{The author was
partially supported by the National Science Centre in Poland, grant Sonata Bis 2022/46/E/ST1/00036 and by the Foundation for Polish Science (START 068.2023)}

\begin{abstract}
In this paper, we establish estimates for the oscillation seminorm for the so-called Carleson--Dunkl operator on weighted $L^p(\RR,w(x)|x|^{2\alpha+1}{\rm d}x)$ spaces with power weights $w(x)=|x|^\beta$. As a result, we obtain oscillation estimates for the standard Carleson operator on $L_{\rm rad}^p(\RR^n,|x|^\beta{\rm d}x)$. As a byproduct, we obtain a transference principle for radial multipliers on $L_{\rm rad}^p$ spaces, in the spirit of the Rubio de Francia transference principle.

\end{abstract}

\maketitle

\section{Introduction and statement of results}

In this paper, we establish a uniform oscillation inequality on weighted $L^p$ spaces for the partial sums of the Dunkl transform. As a result, we are able to derive a uniform oscillation inequality for the standard Fourier transform on the real line and its higher-dimensional counterpart on radial functions. These results can be seen as a generalization of the oscillation inequality proven by Lacey and Terwilleger \cite{LT}. To establish our results, we utilize the recent observations made by Mirek, Szarek, and Wright \cite{MSW} regarding oscillation estimates of projection operators. This approach allows us to bypass time-frequency analysis and use the existing estimates instead. To apply these estimates, we employ techniques developed by Stempak and Trebels \cite{StempakTrebels1997} to transfer the estimates for the multipliers between Dunkl transforms of different orders. As a byproduct, we obtain a 'generalization' of the Rubio de Francia transference principle for radial multipliers from \cite{RdF2}.

\subsection{Statement of results}
Fix $\alpha \geq -1/2$ and let $f$ be a suitable function on $\mathbb{R}$. \textit{The Dunkl transform} of order $\alpha$ of function $f$ is defined as follows:
\begin{equation}\label{tfd}
\mathcal{D}_\alpha f(x): = \int_\mathbb{R} f(y) \frac{1}{2(yx)^\alpha} \big(J_{\alpha}(yx) - iJ_{\alpha+1}(yx)\big) |y|^{2\alpha+1} \, \mathrm{d}y, \quad x \in \mathbb{R},
\end{equation}
where $J_{\alpha}$ is the Bessel function of the first kind of order $\alpha$. The Dunkl transform was introduced by Dunkl \cite{Dunkl} as a generalization of the usual Fourier transform on $\mathbb{R}^n$ related to some group of reflections. In our case, we will be exploring the one-dimensional case. Simple calculations based on the formulas for $J_{1/2}$ and $J_{-1/2}$ show that $\mathcal{D}_{-1/2} = \mathcal{F}$, where
\begin{equation}\label{fouriertrans}
\mathcal{F} f(x) := \frac{1}{\sqrt{2\pi}} \int_\mathbb{R} f(y) e^{-ixy} \, \mathrm{d}y, \quad x \in \mathbb{R},
\end{equation}
is the Fourier transform. For more details about the Dunkl theory, we refer the reader to \cite{Ros1, Ros2} and the references therein.

For $1 \leq p < \infty$, $\alpha\geq-1/2$ and non-negative weight $w$ on $\mathbb{R}$, we denote by $L^p(\mathbb{R}, w|x|^{2\alpha+1}{\rm d}x)$ the space of all measurable functions $f$ on $\mathbb{R}$ for which
\begin{equation*}
    \|f\|_{L^p(\mathbb{R},w|x|^{2\alpha+1}\mathrm{d}x)}: = \left( \int_{-\infty}^\infty |f(x)|^p w(x)|x|^{2\alpha+1} \mathrm{d}x \right)^{\frac{1}{p}} < \infty.
\end{equation*}
The Dunkl transform $\mathcal{D}_\alpha$ extends uniquely to an isometric isomorphism on $L^2(\mathbb{R},|x|^{2\alpha+1}\mathrm{d}x)$ which satisfies the Plancherel identity
\begin{equation*}
    \|\mathcal{D}_\alpha f\|_{L^2(\mathbb{R},|x|^{2\alpha+1}\mathrm{d}x)} = \|f\|_{L^2(\mathbb{R},|x|^{2\alpha+1}{\rm d}x)}, \quad f \in L^2(\mathbb{R},|x|^{2\alpha+1}\mathrm{d}x).
\end{equation*}
Furthermore, the inverse Dunkl transform of order $\alpha$ is defined as
\begin{equation*}
    \widecheck{\mathcal{D}}_\alpha f(x) := \mathcal{D}_\alpha f(-x), \quad x \in \mathbb{R}.
\end{equation*}
Then for $f \in C_{\mathrm{c}}^\infty(\mathbb{R})$, we have $(\widecheck{\mathcal{D}}_\alpha \circ \mathcal{D}_\alpha) f = (\mathcal{D}_\alpha \circ \widecheck{\mathcal{D}}_\alpha)  f= f$. Simple proofs of these facts may be found in \cite[Proposition 1.3]{NowakStempak}.

In this paper, we will be interested in the partial sums of the Dunkl transform $\mathcal{D}_\alpha$. Namely, for any $f \in C_{\mathrm{c}}^\infty(\mathbb{R})$, we define the Dunkl transform partial sum operators
\begin{equation*}
    \mathcal{S}_{t}^\alpha f(x) := \widecheck{\mathcal{D}}_{\alpha}\big(\mathds{1}_{[-t,t]} \mathcal{D}_\alpha f\big)(x), \quad x \in \mathbb{R}.
\end{equation*}
\indent Let \( \mathbb{I} \) be any subset of \( \mathbb{R}_+ \).  
For a given increasing sequence \( I = (I_j : j \in \mathbb{N}) \subseteq \mathbb{I} \),  
the \emph{truncated oscillation seminorm} is defined for any family of measurable functions  
\( (\mathfrak{a}_t(x) : t > 0) \) with values in \( \mathbb{C} \), by setting
\begin{align}
\label{eq:45}
O_{I, J}^2(\mathfrak{a}_t(x): t \in \mathbb{I}) 
= \bigg( \sum_{j=1}^J \sup_{\substack{I_j \le t < I_{j+1} \\ t \in \mathbb{I}}} 
\left| \mathfrak{a}_t(x) - \mathfrak{a}_{I_j}(x) \right|^2 \bigg)^{1/2}
\quad \text{for all } J \in \mathbb{N} \cup \{\infty\}.
\end{align}

For \( J \in \mathbb{N} \cup \{\infty\} \), we denote by \( \mathfrak{S}_J(\mathbb{I}) \) the family of all strictly increasing sequences of length \( J + 1 \) contained in \( \mathbb{I} \). Furthermore, we denote by \( \mathbb{D} := \{ 2^n : n \in \mathbb{Z} \} \) the set of dyadic numbers.

Our main result reads as follows.
\begin{theorem}\label{main:oscil:dunkl}
Let $\alpha\geq-1/2$ and let $p\in[2,\infty)$. Assume that $\beta\in\RR$ is such that
\begin{equation*}
    -1<\beta+(\alpha+1/2)(2-p)<p/2-1.
\end{equation*}
When $p=2$, we additionally allow $\beta=0$. Then we have
\begin{equation*}
    \sup_{J \in \mathbb{N}} \sup_{I \in \mathfrak{S}_J(\mathbb{R}_+)} 
    \left\| O_{I, J}^2\big( \mathcal{S}_t^\alpha f : t > 0 \big) \right\|_{L^p(\mathbb{R},\, |x|^{\beta + 2\alpha + 1} \, \mathrm{d}x)} 
    \lesssim_{p, \alpha, \beta} 
    \| f \|_{L^p(\mathbb{R},\, |x|^{\beta + 2\alpha + 1} \, \mathrm{d}x)},
\end{equation*}
for any \( f \in L^p(\mathbb{R},\, |x|^{\beta + 2\alpha + 1} \, \mathrm{d}x) \).
 When $\beta=0$ then range specified above can be translated into
\begin{equation*}
    2\leq p<\frac{4(\alpha+1)}{2\alpha+1}.
\end{equation*}
In the special case \( \alpha = -\frac{1}{2} \), the right-hand side is understood to be \( +\infty \).

Furthermore, if $p\in(1,\infty)$ and $\beta\in\RR$ satisfies
\begin{equation}\label{main:oscil:dunkl:eq2}
    -1<\beta+(\alpha+1/2)(2-p)<p-1,
\end{equation}
then one has
\begin{equation*}
    \sup_{J\in\NN}\sup_{I\in\mathfrak{S}_J(\DD) }\big\|O_{I,J}^2\big(\calS_{2^n}^\alpha f:n\in\ZZ\big)\big\|_{L^p(\RR,|x|^{\beta+2\alpha+1}{\rm d}x)}\lesssim_{p,\alpha,\beta}\|f\|_{L^p(\RR,|x|^{\beta+2\alpha+1}{\rm d}x)},
\end{equation*}
for any $f\in L^p(\RR, |x|^{\beta+2\alpha+1}{\rm d}x)$. When $\beta=0$ the condition \eqref{main:oscil:dunkl:eq2} can be translated into
\begin{equation*}
    \frac{4(\alpha+1)}{2\alpha+3}< p<\frac{4(\alpha+1)}{2\alpha+1}.
\end{equation*}
In the special case \( \alpha = -\frac{1}{2} \), the right-hand side is understood to be \( +\infty \).
\end{theorem}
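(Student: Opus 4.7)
The strategy combines the Mirek--Szarek--Wright observation on oscillation for nested projections with the Stempak--Trebels transference principle. The partial sums $\mathcal{S}_t^\alpha$ are nested orthogonal projections on $L^2(\mathbb{R},|x|^{2\alpha+1}\mathrm{d}x)$, that is $\mathcal{S}_s^\alpha \mathcal{S}_t^\alpha = \mathcal{S}_{\min(s,t)}^\alpha$, since their Dunkl symbols are the nested indicators $\mathds{1}_{[-t,t]}$. A direct computation on the Dunkl side yields
\begin{equation*}
\mathcal{S}_t^\alpha f - \mathcal{S}_{I_j}^\alpha f \;=\; \mathcal{S}_t^\alpha\bigl((\mathcal{S}_{I_{j+1}}^\alpha - \mathcal{S}_{I_j}^\alpha)f\bigr), \qquad t \in [I_j,I_{j+1}).
\end{equation*}
This is exactly the projection reduction of MSW, and gives the pointwise bound
\begin{equation*}
O_{I,J}^2(\mathcal{S}_t^\alpha f : t > 0)(x) \;\leq\; \Bigl(\sum_{j=1}^{J} |\mathcal{C}^\alpha g_j(x)|^2\Bigr)^{1/2}, \qquad g_j := (\mathcal{S}_{I_{j+1}}^\alpha - \mathcal{S}_{I_j}^\alpha)f,
\end{equation*}
where $\mathcal{C}^\alpha g := \sup_{t>0}|\mathcal{S}_t^\alpha g|$ is the Carleson--Dunkl maximal operator. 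It therefore suffices to prove (a) an $\ell^2$-valued weighted Carleson--Dunkl estimate, and (b) a weighted arbitrary-intervals square function bound $\|(\sum_j |g_j|^2)^{1/2}\|_{L^p(w)} \lesssim \|f\|_{L^p(w)}$, where $w(x) = |x|^{\beta+2\alpha+1}$.

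Both (a) and (b) are obtained on the Dunkl side by transferring their classical Fourier counterparts via Stempak--Trebels. That principle identifies a Dunkl multiplier of order $\alpha$ bounded on $L^p(\mathbb{R}, |x|^{\beta+2\alpha+1}\mathrm{d}x)$ with a standard Fourier multiplier bounded on $L^p(\mathbb{R}, |x|^{\gamma}\mathrm{d}x)$, where $\gamma := \beta + (\alpha+1/2)(2-p)$. The conditions on $\beta$ are precisely the Muckenhoupt characterizations for power weights on the line: $-1 < \gamma < p-1$ is $|x|^{\gamma} \in A_p(\mathbb{R})$ (relevant for the dyadic case), whereas $-1 < \gamma < p/2-1$ is $|x|^{\gamma} \in A_{p/2}(\mathbb{R})$ (relevant for the continuous $\ell^2$-valued case). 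After the transfer, for the continuous part one invokes the $\ell^2$-valued weighted Carleson--Hunt theorem of Hunt--Young together with the Rubio de Francia arbitrary-intervals square function, both requiring $p \geq 2$ and $A_{p/2}$. For the dyadic part one replaces the Rubio de Francia theorem by Kurtz's weighted dyadic Littlewood--Paley decomposition and uses the dyadic $\ell^2$-valued Carleson--Hunt, both valid on the full $A_p$ range and for all $p \in (1,\infty)$. The $\beta = 0$ rephrasings are elementary algebra, and the case $\alpha = -1/2$ reduces to the classical Carleson--Hunt setting, where $\tfrac{4(\alpha+1)}{2\alpha+1} = +\infty$ correctly encodes the absence of any upper restriction on $p$.

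The main technical obstacle is the vector-valued weighted Carleson--Hunt estimate on the transferred Fourier side: while the scalar form is the classical Hunt--Young theorem, the $\ell^2$-valued version with sharp $A_{p/2}$ dependence requires a Fefferman--Stein-type extrapolation together with $\ell^2$-valued Calder\'on--Zygmund theory adapted to Carleson's operator. A secondary delicate point is that Stempak--Trebels transference is naturally formulated for linear multipliers, so it must be upgraded to the nonlinear Carleson--Dunkl maximal via a measurable linearization $t = t(x)$, with the transferred multiplier bounds tracked uniformly over this selection.
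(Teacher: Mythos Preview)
Your overall architecture matches the paper: the MSW projection reduction (Theorem~\ref{thm:p}) to (a) a vector-valued Carleson--Dunkl maximal estimate and (b) an arbitrary-intervals (resp.\ dyadic) square function, together with the identification of $\gamma=\beta+(\alpha+1/2)(2-p)$ with the $A_{p/2}$ (resp.\ $A_p$) range for power weights. Your treatment of (b) via transplantation to the classical Rubio de Francia and Kurtz estimates is exactly the paper's argument.

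The gap is in (a). Transplantation is a statement about \emph{multipliers}: one has $\widecheck{D}_\alpha(m\, D_\alpha f)=T_{\alpha,-1/2}\,\widecheck{D}_{-1/2}\big(m\, D_{-1/2}\, T_{-1/2,\alpha}f\big)$ for each \emph{fixed} symbol $m$, and the passage between weighted spaces uses the $L^p$-boundedness (and $\ell^2$ extension) of the linear operators $T_{\alpha,-1/2}$, $T_{-1/2,\alpha}$. But the supremum over $t$ does not commute with the non-local operator $T_{\alpha,-1/2}$, and after a measurable linearization $t=t_j(x)$ the map $f_j\mapsto \mathcal{S}^\alpha_{t_j(\cdot)} f_j$ is no longer a Dunkl multiplier at all, since its symbol depends on the spatial variable $x$; there is nothing left to transfer. ``Tracking bounds uniformly over the selection'' only yields $\sup_t\|\mathcal{S}_t^\alpha f_j\|_p$, not $\|\sup_t|\mathcal{S}_t^\alpha f_j|\|_p$. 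This is a genuine obstruction, not a technicality.

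The paper avoids it by \emph{not} using transference for (a). Instead it invokes Prestini's pointwise bound (Theorem~\ref{thm:Prestini}), which, combined with the even/odd decomposition \eqref{eq:decomposition}, dominates $\mathcal{C}_*^\alpha f$ pointwise by $|x|^{-(\alpha+1/2)}(M_{\rm HL}+H+\mathcal{H}_*+\mathcal{C})\big((\cdot)^{\alpha+1/2}f\big)(|x|)$. This lands the problem on a classical space $L^p(\mathbb{R},v\,\mathrm{d}x)$ with $v\in A_p$, where the $\ell^2$-valued weighted bounds for each of $M_{\rm HL}$, $H$, $\mathcal{H}_*$, $\mathcal{C}$ are known individually (Andersen--John/Kokilashvili, duality from $M_{\rm HL}$, Cotlar's inequality, and Grafakos--Martell--Soria respectively). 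Two minor corrections: the $\ell^2$-valued weighted Carleson bound is due to Grafakos--Martell--Soria rather than Hunt--Young, and it only needs $A_p$, not $A_{p/2}$; the restrictive $A_{p/2}$ condition in the continuous part comes solely from Rubio de Francia, which is precisely why the two halves of the theorem carry different ranges.
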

Since the one-dimensional Dunkl transform can be expressed in terms of the Hankel transform, and the latter is related to the Fourier transform of radial functions, we can use Theorem~\ref{main:oscil:dunkl} to derive an oscillation inequality for the partial sums of the Fourier transform over balls for radial functions.
\begin{corollary}
Let $n\geq1$ be a positive integer and let $p\in[2,\infty)$. Assume that $\beta\in\RR$ satisfies
\begin{equation*}
    -1<\beta+(n-1)(1-p/2)<p/2-1.
\end{equation*}
When $p=2$, we additionally allow $\beta=0$. Then we have
\begin{equation*}
    \sup_{J\in\NN}\sup_{I\in\mathfrak{S}_J(\RR_+) }\big\|O_{I,J}^2\big(\calF^{-1}(\mathds{1}_{B(0,t)}\calF f):t>0\big)\big\|_{L^p(\RR^n,|x|^{\beta}{\rm d}x)}\lesssim_{p,n,\beta}\|f\|_{L^p(\RR^n,|x|^{\beta}{\rm d}x)},
\end{equation*}
for any radial function $f\in L^p(\RR^n,|x|^{\beta}{\rm d}x)$. Furthermore, if $p\in(1,\infty)$ and $\beta\in\RR$ satisfy
\begin{equation*}
    -1<\beta+(n-1)(1-p/2)<p-1,
\end{equation*}
then we have
\begin{equation*}
    \sup_{J\in\NN}\sup_{I\in\mathfrak{S}_J(\DD) }\big\|O_{I,J}^2\big(\calF^{-1}(\mathds{1}_{B(0,2^m)}\calF f):m\in\ZZ\big)\big\|_{L^p(\RR^n,|x|^{\beta}{\rm d}x)}\lesssim_{p,n,\beta}\|f\|_{L^p(\RR^n,|x|^{\beta}{\rm d}x)},
\end{equation*}
for any radial function $f\in L^p(\RR^n,|x|^{\beta}{\rm d}x)$.
\end{corollary}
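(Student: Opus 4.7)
The plan is to deduce the corollary from Theorem~\ref{main:oscil:dunkl} by transferring the estimates via the classical identification between the Fourier transform of radial functions on $\RR^n$ and the Dunkl/Hankel transform of order $\alpha = n/2-1$. With this choice, $2\alpha+1 = n-1$, so the Dunkl weight $|x|^{2\alpha+1}\dif x$ matches the radial part of Lebesgue measure on $\RR^n$, and the exponent condition $-1<\beta+(\alpha+1/2)(2-p)<p/2-1$ becomes exactly $-1<\beta+(n-1)(1-p/2)<p/2-1$, so the hypotheses line up without change.

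First, given a radial $f(x)=F(|x|)$, I would form the even extension $\tilde F$ of $F$ to $\RR$. Because $z\mapsto J_\alpha(z)/z^\alpha$ is an even entire function while $J_{\alpha+1}(z)/z^\alpha$ is odd, the second summand in the Dunkl kernel integrates to zero against the even function $\tilde F$ with the even measure $|y|^{2\alpha+1}\dif y$, and one gets, for $x>0$,
\begin{equation*}
\calD_\alpha \tilde F(x) = x^{-\alpha}\int_0^\infty F(y)\, J_\alpha(yx)\, y^{\alpha+1}\dif y,
\end{equation*}
which is (up to a universal constant) the standard Fourier--Bessel/Hankel representation of $\calF f$ on $\RR^n$. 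In particular, $\calD_\alpha \tilde F$ is even, so the symmetric truncation $\mathds{1}_{[-t,t]}\calD_\alpha\tilde F$ corresponds exactly to the radial truncation $\mathds{1}_{B(0,t)}\calF f$, and applying $\widecheck{\calD}_\alpha$ and noting $\widecheck{\calD}_\alpha = \calD_\alpha$ on even functions yields
\begin{equation*}
\calF^{-1}\big(\mathds{1}_{B(0,t)}\calF f\big)(x) \; = \; \calS_t^\alpha \tilde F(|x|) \qquad \text{(up to the same universal constant)}.
\end{equation*}

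Next I would convert norms via polar coordinates: with $c_n$ the surface measure of $S^{n-1}$,
\begin{equation*}
\|f\|_{L^p(\RR^n,|x|^\beta \dif x)}^p = c_n\int_0^\infty |F(r)|^p r^{\beta+n-1}\dif r = \tfrac{c_n}{2}\|\tilde F\|_{L^p(\RR,|r|^{\beta+2\alpha+1}\dif r)}^p,
\end{equation*}
and, pointwise in $|x|$, the oscillation seminorm of $(\calF^{-1}(\mathds{1}_{B(0,t)}\calF f))_{t}$ equals (up to the constant) the oscillation seminorm of $(\calS_t^\alpha \tilde F)_t$ at the point $|x|$. So both sides of the desired inequality transfer into the two sides of Theorem~\ref{main:oscil:dunkl} applied to $\tilde F$, and the result follows after a standard density argument to handle rough $f$. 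The same argument works verbatim for the dyadic version.

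There is no substantive obstacle here; the only care needed is bookkeeping the constants in the Fourier--Hankel identity and verifying that the evenness of $\tilde F$ is preserved by $\calS_t^\alpha$, so that the pointwise equality between the Dunkl partial sum and the radial Fourier partial sum is valid for almost every $x$. Once this is pinned down, both the continuous and the dyadic statements are immediate consequences of the two halves of Theorem~\ref{main:oscil:dunkl}.
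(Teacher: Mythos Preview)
Your proposal is correct and follows exactly the route the paper indicates: set $\alpha=(n-2)/2$, use that the Dunkl transform of an even function reduces to the Hankel transform (formula \eqref{DunkHankel}), identify the latter with the Fourier transform of radial functions, and transfer norms via polar coordinates, so that both the hypotheses and the conclusion of Theorem~\ref{main:oscil:dunkl} become the desired statement. The paper does not spell out a separate proof for this corollary, but the same mechanism is written out in detail in the proof of Theorem~\ref{thm:carlesonvectorradial}, and your argument matches it.
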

In the case when $n = 1$, we can strengthen our results, but first, let us recall the definition of the Muckenhoupt class $A_p$. For $p\in(1,\infty)$, the weight $w\colon\RR\to[0,\infty)$ is in the $A_p$ class if
\begin{equation*}
    \sup_{B}\Big(\frac{1}{|B|}\int_B w(x){\rm d}x\Big)\Big(\frac{1}{|B|}\int_Bw(x)^{-p'/p}{\rm d}x\Big)^{p/p'}<\infty,
\end{equation*}
where the supremum is taken over all intervals $B\subset\RR$ and $1/p+1/p'=1$. These classes, introduced by Muckenhoupt in \cite{Mu1}, serve to characterize the boundedness of the Hardy--Littlewood maximal functions on weighted Lebesgue spaces $L^p(\RR,w{\rm d}x)$.
\begin{proposition}\label{prop:weight:carle}
Let $p\in(2,\infty)$ and $w\in A_{p/2}$. Then we have
\begin{equation*}
    \sup_{J\in\NN}\sup_{I\in\mathfrak{S}_J(\RR_+) }\big\|O_{I,J}^2\big(\calF^{-1}(\mathds{1}_{[-t,t]}\calF f):t>0\big)\big\|_{L^p(\RR,w{\rm d}x)}\lesssim_{p,w}\|f\|_{L^p(\RR,w{\rm d}x)},
\end{equation*}
for any $f\in L^p(\RR,w{\rm d}x) $. In the case of the power weight $w(x)=|x|^\beta$ we additionally allow $p=2$. Furthermore if, $p\in(1,\infty)$ and $w\in A_p$, then we have
\begin{equation*}
    \sup_{J\in\NN}\sup_{I\in\mathfrak{S}_J(\DD) }\big\|O_{I,J}^2\big(\calF^{-1}(\mathds{1}_{[-2^n,2^n]}\calF f):n\in\ZZ\big)\big\|_{L^p(\RR,w{\rm d}x)}\lesssim_{p,w}\|f\|_{L^p(\RR,w{\rm d}x)},
\end{equation*}
for any $f\in L^p(\RR,w{\rm d}x)$.
\end{proposition}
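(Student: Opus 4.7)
The plan rests on the algebraic identity for the Fourier projections $\calS_t$. The idempotence $\mathds{1}_{[-s,s]}\mathds{1}_{[-t,t]}=\mathds{1}_{[-s,s]}$ for $s\le t$ gives, for $I_j\le t\le I_{j+1}$,
\begin{equation*}
    \calS_tf-\calS_{I_j}f=\calS_t\bigl((\calS_{I_{j+1}}-\calS_{I_j})f\bigr).
\end{equation*}
Setting $\Delta_jf:=(\calS_{I_{j+1}}-\calS_{I_j})f$, this yields the pointwise domination
\begin{equation*}
    O^2_{I,J}(\calS_tf:t>0)(x)\le\Bigl(\sum_{j=1}^J|C^*\Delta_jf(x)|^2\Bigr)^{1/2},\qquad C^*g(x):=\sup_{t>0}|\calS_tg(x)|,
\end{equation*}
with the analogous bound (restricting the supremum to $t\in\DD$) holding for the dyadic statement.

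Having reduced matters to a vector-valued Carleson operator acting on Littlewood--Paley type pieces $\Delta_jf$, I would combine two classical weighted inputs. First, the weighted Carleson--Hunt theorem asserts $\|C^*g\|_{L^p(w)}\lesssim \|g\|_{L^p(w)}$ for all $w\in A_p$ and $1<p<\infty$; Rubio de Francia's extrapolation (valid for sublinear operators) or a direct Marcinkiewicz--Zygmund argument after linearization upgrades this to the $\ell^2$-valued estimate
\begin{equation*}
    \Bigl\|\Bigl(\sum_j|C^*g_j|^2\Bigr)^{1/2}\Bigr\|_{L^p(w)}\lesssim \Bigl\|\Bigl(\sum_j|g_j|^2\Bigr)^{1/2}\Bigr\|_{L^p(w)}
\end{equation*}
in the same range; since $A_{p/2}\subset A_p$, this is in particular available in the regime $p>2$, $w\in A_{p/2}$. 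Second, I need an $L^p(w)$ bound for the Rubio de Francia-type square function $(\sum_j|\Delta_jf|^2)^{1/2}$. In the continuous case ($I\subset\RR_+$, $p>2$, $w\in A_{p/2}$), this is the weighted Rubio de Francia square function theorem for arbitrary disjoint intervals. In the dyadic case ($I\subset\DD$, $1<p<\infty$, $w\in A_p$), each multiplier $\mathds{1}_{\{I_j\le|\xi|<I_{j+1}\}}$ is piecewise constant on the standard dyadic intervals of $\RR$ with total variation there uniformly bounded by $2$; a Khintchine randomization combined with the weighted Marcinkiewicz multiplier theorem for $A_p$ weights then yields the required bound uniformly in the choice of $I$.

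The scheme does not reach the corner case $p=2$ with power weight $w=|x|^\beta$, admissible for $\beta\in(-1,0]$ (precisely the power weights in $A_1$). This case I would treat separately by invoking Theorem~\ref{main:oscil:dunkl} directly at $\alpha=-1/2$, where $\calD_{-1/2}=\calF$ and $|x|^{2\alpha+1}=1$.

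The delicate step is the vector-valued passage for the Carleson operator: the scalar $A_p$ estimate for $C^*$ is the deep Hunt--Young theorem, and while the extrapolation to $\ell^2$-valued bounds is by now standard, it cannot be derived by the soft arguments used elsewhere in this scheme. All remaining ingredients are classical weighted Littlewood--Paley theory.
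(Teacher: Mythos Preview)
Your proposal is correct and essentially identical to the paper's proof: the paper cites the Mirek--Szarek--Wright projection theorem (Theorem~\ref{thm:p}), whose mechanism is precisely the identity $\calS_t f-\calS_{I_j}f=\calS_t\Delta_jf$ you wrote out, and then feeds in the same two weighted inputs---the vector-valued Carleson bound (cited from Grafakos--Martell--Soria \cite{GMS} rather than derived via Hunt--Young plus extrapolation) and the weighted Rubio de Francia/Kurtz square-function estimates. For the $p=2$ power-weight corner you can avoid the forward reference to Theorem~\ref{main:oscil:dunkl} by citing Rubio de Francia's $L^2$ result \cite[Theorem~4.2]{RdF} directly, which is exactly what the paper (and indeed the proof of Theorem~\ref{main:oscil:dunkl} itself at $\alpha=-1/2$) does.
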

The above result is a generalization of the result proven by Lacey and Terwilleger \cite{LT} to the setting of weighted spaces with weights from $A_p$. It is believed that the oscillation seminorm is some kind of endpoint of the $r$-variation seminorm corresponding to $r=2$, although currently, we do not know the exact nature of that endpoint. Recall that for any $r \geq 1$, the $r$-variation seminorm is defined for any family of measurable functions $(\mathfrak{a}_t(x): t > 0)$ by setting
\begin{align*}
    V^{r}(\mathfrak{a}_t(x) : t > 0) = \sup_{J \in \mathbb{N}} \sup_{\substack{t_{0} < \cdots < t_{J} \\ t_{j} > 0}} \left( \sum_{j = 0}^{J-1} \big|\mathfrak{a}_{t_{j+1}}(x) - \mathfrak{a}_{t_j}(x)\big|^{r} \right)^{1/r}.
\end{align*}
For more details, we refer to \cite{MSS}. In this sense, our result can be seen as a closure of the variational estimates obtained by Di Plinio, Do, and Uraltsev \cite[Theorem 2]{DDU}, who established that for any $r \in (2, \infty)$, $p > r'$, where $1/r + 1/r' = 1$, and any $w \in A_{p/r'}$, one has
\begin{equation*}
    \big\|V^r(\mathcal{F}^{-1}(\mathds{1}_{[-t,t]}\mathcal{F} f) : t > 0)\big\|_{L^p(\mathbb{R}, w \, \mathrm{d}x)} \lesssim_{p, w} \|f\|_{L^p(\mathbb{R}, w \, \mathrm{d}x)}.
\end{equation*}

\begin{theorem}[Rubio de Francia transference theorem for radial multipliers]\label{thm:RdFR}
Let $p\in(1,\infty)$ and let $-1<\beta<p-1$ be such that the sequence of even functions $(m_k)_{k\in\ZZ}$ is a vector-valued bounded multiplier on  $L^p(\RR,|x|^\beta {\rm d}x)$, that is
\begin{equation}\label{thm:RdFR:eq1}
    \Big\|\Big(\sum_{k\in\ZZ}\big|\calF^{-1}(m_k\calF f)\big|^2\Big)^{1/2}\Big\|_{L^p(\RR,|x|^\beta{\rm d}x)}\lesssim\|f\|_{L^p(\RR,|x|^\beta {\rm d}x)}.
\end{equation}
Then for any $n\geq 1$ we have
\begin{equation}\label{thm:RdFR:eq2}
    \Big\|\Big(\sum_{k\in\ZZ}\big|\calF^{-1}(m_k(|\cdot|)\calF f)\big|^2\Big)^{1/2}\Big\|_{L^p(\RR^n,|x|^{\beta^*}{\rm d}x)}\lesssim_{n}\|f\|_{L^p(\RR^n,|x|^{\beta^*} {\rm d}x)},
\end{equation}
for any radial function $f\in L_{\rm rad}^p(\RR^n,|x|^\beta{\rm d}x)$, where $\beta^*:=\beta-(n-1)(1-p/2)$ and $|x|$ is the standard Euclidean norm of $x\in\RR^n$.
\end{theorem}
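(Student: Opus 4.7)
The plan is to reduce the multi--dimensional radial multiplier inequality on $\RR^n$ to the one--dimensional hypothesis \eqref{thm:RdFR:eq1} through two steps: first, convert everything to a one--dimensional weighted Dunkl multiplier bound by using that the Fourier transform of a radial function on $\RR^n$ is computed by the Dunkl transform of order $\alpha = n/2 - 1$; second, apply the Stempak--Trebels transference principle to convert this Dunkl bound back into a Fourier multiplier bound in a different weight, matching the hypothesis \eqref{thm:RdFR:eq1}.

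\textbf{Step 1 (radial reduction).} Set $\alpha = n/2 - 1$, so that $2\alpha + 1 = n - 1$. For a radial $f \in L_{\rm rad}^p(\RR^n, |x|^{\beta^*}{\rm d}x)$, write $f(x) = F(|x|)$ and extend $F$ evenly to $\RR$. Passing to polar coordinates yields
\[
\|f\|_{L^p(\RR^n,|x|^{\beta^*}{\rm d}x)}^p \sim_n \|F\|_{L^p(\RR,|x|^{\beta^*+2\alpha+1}{\rm d}x)}^p.
\]
On even functions, $\calD_\alpha$ coincides up to a constant with the Hankel transform of order $\alpha$, and by Bochner's identity the same Hankel transform expresses the Fourier transform of a radial function on $\RR^n$ through its radial profile. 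Since each $m_k$ is even, one obtains
\[
\calF^{-1}\bigl(m_k(|\cdot|)\calF f\bigr)(x) = G_k(|x|),\qquad G_k := \calD_\alpha^{-1}(m_k\calD_\alpha F).
\]
Hence \eqref{thm:RdFR:eq2} is equivalent to the one--dimensional square--function estimate
\[
\Bigl\|\Bigl(\sum_{k\in\ZZ}|\calD_\alpha^{-1}(m_k\calD_\alpha F)|^2\Bigr)^{1/2}\Bigr\|_{L^p(\RR,|x|^{\beta^*+2\alpha+1}{\rm d}x)} \lesssim \|F\|_{L^p(\RR,|x|^{\beta^*+2\alpha+1}{\rm d}x)}
\]
for every even $F$ in the indicated space.

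\textbf{Step 2 (transference).} The exponent $\beta^*$ is chosen precisely so that $\beta^* + (\alpha+1/2)(2-p) = \beta$: indeed $(\alpha+1/2)(2-p) = \tfrac{n-1}{2}(2-p) = (n-1)(1-p/2)$, and the hypothesis $-1 < \beta < p-1$ places this exponent in the admissible range for the Fourier case. The Stempak--Trebels transference principle--the same tool driving the proof of Theorem~\ref{main:oscil:dunkl}--asserts that an $L^p(\RR,|x|^{\gamma+2\alpha+1}{\rm d}x)$ multiplier bound for $\calD_\alpha$ is equivalent to an $L^p(\RR,|x|^{\gamma+(\alpha+1/2)(2-p)}{\rm d}x)$ multiplier bound for $\calF = \calD_{-1/2}$, with comparable constants. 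The intertwining operator is essentially multiplication by an explicit power of $|x|$, which acts pointwise in the index $k$, so the equivalence extends verbatim to the $\ell^2(\ZZ)$--valued setting. Taking $\gamma = \beta^*$ reduces the display from Step~1 to
\[
\Bigl\|\Bigl(\sum_{k\in\ZZ}|\calF^{-1}(m_k\calF F)|^2\Bigr)^{1/2}\Bigr\|_{L^p(\RR,|x|^\beta{\rm d}x)} \lesssim \|F\|_{L^p(\RR,|x|^\beta{\rm d}x)},
\]
which, restricted to even $F$, is a special case of \eqref{thm:RdFR:eq1}.

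\textbf{Main obstacle.} I expect the only real subtlety to be confirming that the Stempak--Trebels transference is compatible with the $\ell^2(\ZZ)$--valued norm required for the square function. Since the intertwining step is implemented by one and the same scalar multiplication operator for every index $k$, this extension reduces to the scalar case and should be essentially automatic; the substantive content of the proposition is the numerology $\beta^* = \beta - (n-1)(1-p/2)$ dictated by the transference formula. All auxiliary range conditions needed by Stempak--Trebels are guaranteed by $-1 < \beta < p-1$.
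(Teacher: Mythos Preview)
Your approach is essentially the paper's: reduce to a one--dimensional Dunkl (equivalently, Hankel) multiplier bound via Bochner's formula, then invoke the Stempak--Trebels transference machinery (the paper's Corollary~\ref{eqmulti}) to identify $\calM_p^{-1/2,\beta}$ with $\calM_p^{\alpha,\beta^*}$ for $\alpha=(n-2)/2$.

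There is one genuine inaccuracy in your Step~2. The intertwining between $\calD_\alpha$ and $\calF=\calD_{-1/2}$ is \emph{not} ``essentially multiplication by an explicit power of $|x|$.'' Multiplication by $|x|^{\pm(\alpha+1/2)}$ only mediates between the standard and \emph{modified} Dunkl transforms $\calD_\alpha\leftrightarrow D_\alpha$ (relation~\eqref{dunksrelation}, Fact~\ref{fact:multi}); passing between different orders $D_\alpha\leftrightarrow D_{-1/2}$ requires the transplantation operator $T_{\alpha,-1/2}=\widecheck{D}_\alpha\circ D_{-1/2}$, which is a genuine singular integral operator whose weighted $L^p$ boundedness is the substance of the Stempak result you cite. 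So your stated reason for the $\ell^2(\ZZ)$ extension---that the intertwining acts pointwise in $k$ as a scalar multiplication---is wrong.

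The fix is easy and is exactly what the paper does: $T_{\alpha,-1/2}$ is \emph{linear} and bounded on $L^p(\RR,|x|^\beta{\rm d}x)$ under the condition $-1<\beta<p-1$ (which is precisely \eqref{RdF eq:3} with $\gamma=-1/2$), and the $\ell^2$ vector--valued extension then follows from the Marcinkiewicz--Zygmund theorem (the paper's \eqref{VVTRANS:eq4}, via \cite[Theorem~5.5.1]{Grafakos}). Once you replace your justification with this, the argument is complete and matches the paper.
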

The classical Rubio de Francia transference theorem \cite[Theorem 2.1]{RdF2} states that for $p=2$ and $-1<\beta<1$ the inequality \eqref{thm:RdFR:eq1} implies that \eqref{thm:RdFR:eq2} holds for any $f\in L^2(\RR^n,|x|^{\beta} {\rm d}x)$. Our result is a generalization of this result to $p\neq 2$. In such a general setting, we can consider the subset of radial functions only, as shown by the famous disc multiplier theorem proven by Fefferman \cite{Fefferman1971}. It is worth noting that our proof relies heavily on the nature of radial functions and does not cover the original Rubio de Francia result.\\
\textbf{Acknowledgments.}
    The author would like to express his gratitude to the anonymous referees for their precise comments and remarks, which have significantly improved this paper.

\subsection{Notation}\label{sec:notation}
We denote $\NN:=\{1, 2, \ldots\}$ and $\RR_+:=(0, \infty)$. For $n\in\NN$ the sets $\ZZ^n$, $\RR^n$ have the usual meaning. 

For a function $f$ on $\RR$ let us denote by $f_{\rm e}$ and $f_{\rm o}$ its even and odd parts restricted to $\RR_+$ , i.e.
\begin{equation*}
	    f_{\rm e}(x):=\frac12(f(x)+f(-x)), \quad f_{\rm o}(x):=\frac12(f(x)-f(-x)), \quad x\in\RR_+.
\end{equation*}

By $\calF$ we denote the Fourier transform defined by \eqref{fouriertrans} and by $\calF^{-1}$ its inverse transformation.

By \( C_{\rm c}^\infty(X) \), where \( X = \mathbb{R} \) or \( X = \mathbb{R}_+ \), we denote the set of all smooth functions with compact support in \( X \).

We write $A \lesssim B$ to indicate that $A\le CB$ with a constant $C>0$. The constant $C$ may vary from line to line. We write $\lesssim_{\delta}$ if the implicit constant depends on $\delta$. 
\section{Preliminaries}
\subsection{Dunkl and Hankel transforms}
The Dunkl transform on the real line can be seen as a symmetrization of the Hankel transform. First, recall that for given $\alpha\geq-1/2$ \textit{the Hankel transform} of order $\alpha$ of a suitable function $f$ on $\RR_+$ is defined by
\begin{equation*}
	\calH_\alpha f(x):=\int_0^\infty{f(y)\frac{J_\alpha(xy)}{(xy)^\alpha}y^{2\alpha+1}{\rm d}y}, \quad x\in\RR_+.
\end{equation*}
Then the Dunkl transform $\calD_\alpha$ can be expressed as
 \begin{equation}\label{DunkHankel}
     \calD_\alpha f(x)=\calH_\alpha f_{\rm e}(|x|)-ix\calH_{\alpha+1}\big(f_{\rm o}/(\cdot)\big)(|x|),\quad x\in\RR.
 \end{equation}
In analogy to the Dunkl transform, for any $f\in C_{\rm c}^\infty(\RR_+)$, we define the partial sums of the Hankel transform as
\begin{equation*}
    \widetilde{\calS}_{t}^\alpha f(x):={\calH}_{\alpha}\big(\mathds{1}_{[0,t]} {\calH}_\alpha f\big)(x),\quad x\in\RR_+.
\end{equation*}
Having defined the partial sum operators, we may define the corresponding maximal functions. For $f\in C_{\mathrm{c}}^\infty(\mathbb{R})$, we define the \textit{Carleson--Dunkl} operator as
 \begin{equation}
     \calC_{\ast}^\alpha f(x):=\sup_{t>0} |\calS_{t}^\alpha f|(x),\quad x\in\RR.
 \end{equation}
Similarly, we define the \textit{Carleson--Hankel} operator as
\begin{equation}
    \widetilde{\calC}_{\ast}^\alpha f(x):=\sup_{t>0} |\widetilde{\calS}_{t}^\alpha f|(x),\quad x\in\RR_+.
\end{equation}
The $L^p$ boundedness of the Carleson--Hankel operator was established by Kanjin \cite{Kanjin88} and independently by Prestini \cite{Prestini88} at the end of the 80s.

\begin{theorem}[Kanjin--Prestini]\label{thmKP}
Let $\alpha\geq-1/2$ and let $p\in(1,\infty)$ satisfy
 \begin{equation}
     \frac{4(\alpha+1)}{2\alpha+3}<p<\frac{4(\alpha+1)}{2\alpha+1}.
 \end{equation}
There exists a constant $C_{p,\alpha}>0$ such that for any $f\in L^p(\mathbb{R}_+,x^{2\alpha+1}{\rm d}x)$, we have
\begin{equation}\label{eq:1:CHest}
    \big\|\widetilde{\calC}_{\ast}^\alpha f\big\|_{L^p(\RR_+,x^{2\alpha+1}{\rm d}x)}\leq C_{p,\alpha}\|f\|_{L^p(\RR_+,x^{2\alpha+1}{\rm d}x)}.
\end{equation}
\end{theorem}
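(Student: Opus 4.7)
The plan is to reduce the Carleson-Hankel estimate to the classical Carleson-Hunt theorem via asymptotic analysis of the Bessel kernel. Using the expansion
\begin{equation*}
J_\alpha(r) = \sqrt{2/(\pi r)}\,\cos\big(r - \alpha\pi/2 - \pi/4\big) + O(r^{-3/2}), \qquad r \to \infty,
\end{equation*}
together with the small-argument bound $J_\alpha(r) = O(r^\alpha)$ as $r \to 0$, I would split the Hankel kernel $(xy)^{-\alpha} J_\alpha(xy)$ into a leading oscillatory piece of the form $(xy)^{-\alpha-1/2}(c_\alpha^+ e^{ixy} + c_\alpha^- e^{-ixy})$, localised to $\{xy \gtrsim 1\}$ via a smooth cutoff, plus a remainder.

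After a dyadic decomposition in the product $xy$, the remainder has a kernel that is absolutely integrable in $y$ in a dilation-uniform sense, and so can be bounded, uniformly in $t$, pointwise by the Hardy-Littlewood maximal function of $f$ with respect to the doubling measure $y^{2\alpha+1}\mathrm{d}y$. Standard $A_p$ theory on spaces of homogeneous type then yields an $L^p$-bound for the remainder for every $p\in(1,\infty)$, so the critical range of $p$ is not forced by this step.

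For the oscillatory main term I would set $F(y):= y^{\alpha+1/2} f(y)$ and $g_t(x):= x^{\alpha+1/2}\widetilde{\calS}_t^\alpha f(x)$. Applying the same Bessel asymptotics to $\calH_\alpha f$ inside $\widetilde{\calS}_t^\alpha$ realises $g_t$, modulo a further maximal error, as a classical Carleson-type partial sum operator in $x$ at threshold $t$ applied to $F$. The weighted norm transforms as $\|f\|_{L^p(\RR_+, y^{2\alpha+1}\mathrm{d}y)}=\|F\|_{L^p(\RR_+, y^\gamma\mathrm{d}y)}$ with $\gamma:=(2\alpha+1)(1-p/2)$, and identically for the pair $\widetilde{\calS}_t^\alpha f$, $g_t$. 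A direct computation shows that the power weight $|x|^\gamma$ lies in the Muckenhoupt class $A_p(\RR)$ precisely when
\begin{equation*}
\frac{4(\alpha+1)}{2\alpha+3}<p<\frac{4(\alpha+1)}{2\alpha+1},
\end{equation*}
so the weighted Carleson-Hunt theorem applied to $\sup_{t>0}|g_t|$ closes the estimate.

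The principal obstacle is the treatment of the transition region $xy\approx 1$, where the asymptotic expansion is not yet accurate but the small-argument bound is still active: one must replace the naive cutoff above with a smooth partition of unity and integrate by parts in the phase to extract enough decay from the error pieces. A secondary issue is a density argument justifying the substitution $f\mapsto F$ on the full weighted $L^p$ space, which should first be set up for $f\in C_{\rm c}^\infty(\RR_+)$ and then extended by continuity using the estimate itself.
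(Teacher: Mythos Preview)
Your strategy is exactly Prestini's, which is what the paper invokes: rather than reproving the theorem, the paper records Prestini's pointwise estimate (Theorem~\ref{thm:Prestini}) and deduces the bound from weighted $L^p$ estimates for the classical operators appearing there. The substitution $F(y)=y^{\alpha+1/2}f(y)$, the weight exponent $\gamma=(2\alpha+1)(1-p/2)$, and the $A_p$ computation yielding the stated range of $p$ all match.

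One point is understated. You claim the non-asymptotic remainder is controlled pointwise by the Hardy--Littlewood maximal function alone. In Prestini's actual bound the error terms require, in addition to $M_{\rm HL}$, the conjugate Hardy operator $Hf(x)=\int_{|x|}^\infty |f(y)|\,y^{-1}\,\mathrm{d}y$ (from the off-diagonal region $y\gg x$, where the kernel decays only like $y^{-1}$ and is not an averaging kernel) and the maximal Hilbert transform $\calH_*$ (from the near-diagonal singularity produced when the two cosine asymptotics are multiplied out). All of these, together with the Carleson--Hunt operator $\calC$, are bounded on $L^p(\RR,|x|^\gamma\,\mathrm{d}x)$ precisely when $|x|^\gamma\in A_p$, so the conclusion is unaffected; but a careful write-up along your lines will produce these extra operators rather than a single maximal function.
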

Of particular interest to us will be the approach discovered by Prestini \cite{Prestini88}. Namely, in order to prove \eqref{eq:1:CHest}, she provided a pointwise estimate of the Hankel transform partial sums $\widetilde{\calS}_{t}^\alpha f(x)$ in terms of the Hardy--Littlewood maximal function, the maximal Hilbert transform, the conjugate of the Hardy operator, and the classical Carleson--Hunt operator.
\begin{theorem}[Prestini]\label{thm:Prestini}
Let $\alpha\geq-1/2$ and let $f\in C_{\rm c}^\infty(\RR_+)$. There exists a constant $C_\alpha>0$ such that
\begin{equation*}
\big|\calH_{\alpha}\big(\mathds{1}_{[0,t]}\calH_\alpha f\big)(|x|)\big|\leq C_\alpha |x|^{-(\alpha+1/2)}(M_{\rm HL}+H+\calH_\ast+\calC)((\cdot)^{\alpha+1/2}f)(|x|),\quad x\in\RR,
 \end{equation*}
 where
\begin{equation}\label{eq:2:HL}
    M_{\rm HL}f(x):=\sup_{r>0}\frac{1}{2r}\int_{-r}^r |f(x-y)|{\rm d}y,\quad x\in\RR,
\end{equation}
is the Hardy--Littlewood maximal function,
\begin{equation}\label{eq:3:H}
    Hf(x):=\int_{|x|}^\infty\frac{|f(y)|}{y}{\rm d}y,\quad x\in\RR,
\end{equation}
is the conjugate Hardy operator,
\begin{equation}\label{eq:4:Hilb}
    \calH_\ast f(x):=\sup_{\varepsilon>0}\Bigg|\int_{|y|>\varepsilon}\frac{f(x-y)}{y}{\rm d}y\Bigg|,\quad x\in\RR,
\end{equation}
is the maximal Hilbert transform, and
\begin{equation}\label{eq:5:CH}
    \calC f(x):=\sup_{\varepsilon>0}\sup_{\xi\in\RR}\Bigg|\int_{|y|>\varepsilon}{e^{i\xi y}}\frac{f(x-y)}{y}{\rm d}y\Bigg|,\quad x\in\RR
\end{equation}
is the Carleson--Hunt maximal operator. The function \( f \) on the right-hand side of the statement is understood as a function on \( \mathbb{R} \), defined by setting \( f(x) \) for \( x \in \mathbb{R}_+ \), and \( 0 \) otherwise.

\end{theorem}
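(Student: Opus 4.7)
The plan is to reduce the Hankel partial sum to an explicit kernel via the Weber--Schafheitlin (Lommel) identity, substitute the large-argument asymptotics for the Bessel functions, and identify the principal oscillatory piece as a modulated truncated Hilbert transform whose supremum in $t$ is the Carleson--Hunt operator; the small-argument regions and the lower-order remainders are then absorbed into the three simpler operators $M_{\rm HL}$, $H$, and $\calH_\ast$. Throughout I write $g(s):=s^{\alpha+1/2}f(s)\mathds{1}_{\RR_+}(s)$ extended by zero to $\RR$; the target is to majorize the partial sum at $x>0$ by $x^{-(\alpha+1/2)}$ times the four model operators applied to $g$.

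After swapping the order of integration and applying Lommel's formula I obtain
\begin{equation*}
\calH_\alpha\big(\mathds{1}_{[0,t]}\calH_\alpha f\big)(x)= \int_0^\infty \frac{t\big(sJ_\alpha(xt)J_{\alpha+1}(st)-xJ_{\alpha+1}(xt)J_\alpha(st)\big)}{(xs)^\alpha(s^2-x^2)}\,s^{2\alpha+1}f(s)\,{\rm d}s.
\end{equation*}
In the zones $xt\le 1$ or $st\le 1$ the small-argument bound $|J_\nu(z)|\lesssim z^\nu$ reduces this kernel, after extracting $x^{-(\alpha+1/2)}$, to something pointwise dominated by $x^{-1}\mathds{1}_{(0,x)}(s)+s^{-1}\mathds{1}_{[x,\infty)}(s)$, whose pairing with $|g|$ is exactly $M_{\rm HL}g(x)+Hg(x)$.

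In the main region $xt,st\gtrsim 1$ I insert the asymptotics $J_\alpha(z)=\sqrt{2/(\pi z)}\cos(z-\theta)+O(z^{-3/2})$ and $J_{\alpha+1}(z)=\sqrt{2/(\pi z)}\sin(z-\theta)+O(z^{-3/2})$ with $\theta=\pi\alpha/2+\pi/4$. Via product-to-sum the leading numerator becomes
\begin{equation*}
s\cos(xt-\theta)\sin(st-\theta)-x\sin(xt-\theta)\cos(st-\theta)=\tfrac{s-x}{2}\sin\big(t(s+x)-2\theta\big)+\tfrac{s+x}{2}\sin\big(t(s-x)\big),
\end{equation*}
so after division by $(s-x)(s+x)$ the kernel splits into two pieces. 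The piece with kernel $\sin(t(s+x)-2\theta)/(s+x)$ has pointwise size $1/(s+x)$ and is absorbed into $M_{\rm HL}g(x)+Hg(x)$ exactly as in the previous step. For the principal piece $\sin(t(s-x))/(s-x)$ paired with $g(s)$, I cut at $|s-x|\le 1/t$: the inner region is bounded by $M_{\rm HL}g(x)$ via $|\sin u|\le|u|$, while on $|s-x|>1/t$ one writes $\sin(t(s-x))=\operatorname{Im}e^{it(s-x)}$, factors $e^{-itx}$, and after the substitution $y=x-s$ recognises precisely the integrand defining $\calC g(x)$ at modulation $\xi=-t$ and truncation $\veps=1/t$. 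The $O(z^{-3/2})$ Bessel remainders give absolutely convergent contributions dominated by $M_{\rm HL}g+Hg$, and any residual oscillation in $s$ that does not depend on $t$ is controlled by $\calH_\ast g$.

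The main obstacle is the bookkeeping in the transition zones $xt\sim 1$ and $st\sim 1$, where neither the power bound nor the oscillatory expansion is sharp. There I would use the uniform estimate $|J_\nu(z)|\lesssim\min(z^\nu,z^{-1/2})$ together with a splitting of the inner $y$-integral at the scale $1/\max(x,s)$ to prevent logarithmic losses when matching the two regimes, so that the boundary contributions still fit into the stated four-operator majorant.
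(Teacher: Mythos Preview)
The paper does not actually supply a proof of this theorem: it is quoted from Prestini \cite{Prestini88} (with an alternative proof attributed to Romera--Soria \cite{RomeraSoria91}) and only used as a black box afterwards. So there is no ``paper's own proof'' to compare against line by line.

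That said, your sketch follows exactly the classical route of those references. The Lommel (Weber--Schafheitlin) kernel formula, the split into the regions $xt\le 1$, $st\le 1$, and $xt,st>1$, the insertion of the large-argument Bessel asymptotics, and the trigonometric identity that turns the numerator into $\frac{s-x}{2}\sin(t(s+x)-2\theta)+\frac{s+x}{2}\sin(t(s-x))$ are precisely Prestini's moves; the identification of the second term, after cutting at $|s-x|=1/t$, with a modulated truncated Hilbert integral is how the Carleson operator enters. Your treatment of the first trigonometric term, of the $O(z^{-3/2})$ remainders, and of the small-argument zones by $M_{\rm HL}+H$ is also the standard one. The only place where your write-up is a genuine sketch rather than a proof is the transition-zone paragraph: in practice one does not need anything as delicate as you hint, since the uniform bound $|J_\nu(z)|\lesssim\min(z^\nu,z^{-1/2})$ together with the crude kernel estimate $|K_t(x,s)|\lesssim (xs)^{-\alpha-1/2}\min\big(x^{-1}\mathds{1}_{(0,2x)}(s)+s^{-1}\mathds{1}_{(x/2,\infty)}(s)\big)$ in those zones already feeds directly into $M_{\rm HL}g+Hg$ with no logarithmic loss. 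In short: your plan is correct and is essentially the same approach as Prestini's original argument that the paper cites.
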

Several years following Prestini's work, Romera and Soria \cite{RomeraSoria91} provided an alternative proof of the aforementioned theorem. Prestini's estimate plays a pivotal role in our paper, enabling us to work with well-established operators \eqref{eq:2:HL}--\eqref{eq:5:CH} instead of $\calC_{\ast}^\alpha$ and $\widetilde{\calC}_{\ast}^\alpha$.

In 2007, El Kamel and Yacoub \cite{EKY} utilized the formula \eqref{DunkHankel} together with Prestini's estimate and proved the $L^p$ boundedness of the Carleson--Dunkl operator $\calC_{\ast}^\alpha$.
\begin{theorem}[El Kamel--Yacoub]
Let $\alpha\geq-1/2$ and let $p\in(1,\infty)$ satisfy
 \begin{equation}
     \frac{4(\alpha+1)}{2\alpha+3}<p<\frac{4(\alpha+1)}{2\alpha+1}.
 \end{equation}
There exists a constant $C_{p,\alpha}>0$ such that for any $f\in L^p(\mathbb{R},|x|^{2\alpha+1}{\rm d}x)$, we have
\begin{equation}
    \big\|\calC_{\ast}^\alpha f\big\|_{L^p(\RR,|x|^{2\alpha+1}{\rm d}x)}\leq C_{p,\alpha}\|f\|_{L^p(\RR,|x|^{2\alpha+1}{\rm d}x)}.
\end{equation}
\end{theorem}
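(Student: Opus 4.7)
The plan is to reduce $\calC_\ast^\alpha$ to the Carleson--Hankel operators $\widetilde{\calC}_\ast^\alpha$ and $\widetilde{\calC}_\ast^{\alpha+1}$ through the symmetrization identity \eqref{DunkHankel}, and then to apply Prestini's pointwise estimate (Theorem~\ref{thm:Prestini}) in order to reduce everything to standard $A_p$-weighted $L^p$ bounds for $M_{\rm HL}$, $H$, $\calH_\ast$, and the Carleson--Hunt operator $\calC$. The first and decisive step is to split $\calS_t^\alpha f$ into a Hankel partial sum of order $\alpha$ applied to the even part of $f$ and a Hankel partial sum of order $\alpha+1$ applied to the odd part of $f$ divided by $(\cdot)$. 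Concretely, for $g := \mathds{1}_{[-t,t]}\calD_\alpha f$ one reads off from \eqref{DunkHankel} that $g_{\rm e}(x) = \mathds{1}_{[0,t]}(x)\calH_\alpha f_{\rm e}(x)$ and $g_{\rm o}(x)/x = -i\mathds{1}_{[0,t]}(x)\calH_{\alpha+1}(f_{\rm o}/(\cdot))(x)$ for $x\in\RR_+$, and then applying $\widecheck{\calD}_\alpha$ (which amounts to \eqref{DunkHankel} with the argument negated) produces the identity
\[
\calS_t^\alpha f(x) = \widetilde{\calS}_t^\alpha f_{\rm e}(|x|) + x\,\widetilde{\calS}_t^{\alpha+1}\big(f_{\rm o}/(\cdot)\big)(|x|),
\]
whence, taking suprema in $t>0$, the pointwise control
\[
\calC_\ast^\alpha f(x) \le \widetilde{\calC}_\ast^\alpha f_{\rm e}(|x|) + |x|\,\widetilde{\calC}_\ast^{\alpha+1}\big(f_{\rm o}/(\cdot)\big)(|x|), \qquad x\in\RR.
\]

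Next I would invoke Theorem~\ref{thm:Prestini} for each Hankel partial sum: for the first summand with parameter $\alpha$ applied to $f_{\rm e}$, and for the second with parameter $\alpha+1$ applied to $f_{\rm o}/(\cdot)$. In the second case the extra factor $|x|$ combines cleanly with $|x|^{-(\alpha+3/2)}$ and the argument $(\cdot)^{\alpha+3/2} f_{\rm o}(\cdot)/(\cdot)$ telescopes to $(\cdot)^{\alpha+1/2} f_{\rm o}(\cdot)$, so that both contributions take the common shape
\[
|x|^{-(\alpha+1/2)}\,(M_{\rm HL} + H + \calH_\ast + \calC)\big((\cdot)^{\alpha+1/2} h\big)(|x|), \qquad h\in\{f_{\rm e}, f_{\rm o}\}.
\]
A change of variables then shows that bounding this object in $L^p(\RR,|x|^{2\alpha+1}\dif x)$ by $\|f\|_{L^p(\RR,|x|^{2\alpha+1}\dif x)}$ is equivalent to the $L^p(\RR,|x|^\gamma \dif x)$ boundedness of each of $M_{\rm HL}$, $H$, $\calH_\ast$, $\calC$ against the norm $\|(\cdot)^{\alpha+1/2}h\|_{L^p(\RR_+,x^{\gamma}\dif x)}=\|h\|_{L^p(\RR_+,x^{2\alpha+1}\dif x)}\le\|f\|_{L^p(\RR,|x|^{2\alpha+1}\dif x)}$, with
\[
\gamma := (2\alpha+1) - p(\alpha+1/2) = (\alpha+1/2)(2-p).
\]

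It therefore remains to verify that $|x|^\gamma\in A_p(\RR)$, which amounts to the pair of inequalities $-1<\gamma<p-1$; a direct computation translates these into the declared range $\tfrac{4(\alpha+1)}{2\alpha+3}<p<\tfrac{4(\alpha+1)}{2\alpha+1}$. Granted this, the weighted $L^p$ boundedness of $M_{\rm HL}$ (Muckenhoupt), of $H$ (a direct Hardy inequality), of $\calH_\ast$ (Hunt--Muckenhoupt--Wheeden), and of $\calC$ (Hunt--Young) closes the argument. I expect the main obstacle to be Step~1: making sure that $\mathds{1}_{[-t,t]}$ interacts correctly with both parity components of $\calD_\alpha f$, and that the two resulting Hankel partial sums indeed have the right orders $\alpha$ and $\alpha+1$, so that the exponent arithmetic in Step~3 aligns the three scales $|x|^{2\alpha+1}$, $|x|^\gamma$, and $x^{2\alpha+1}$ into the precise $A_p$ condition that produces the stated range of $p$.
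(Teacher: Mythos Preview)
Your proposal is correct and follows essentially the same approach as the paper (and as \cite{EKY}): the symmetrization identity gives $\calS_t^\alpha f(x)=\widetilde{\calS}_t^\alpha f_{\rm e}(|x|)+x\,\widetilde{\calS}_t^{\alpha+1}(f_{\rm o}/(\cdot))(|x|)$, Prestini's pointwise estimate reduces both pieces to $|x|^{-(\alpha+1/2)}K((\cdot)^{\alpha+1/2}h)$, and the $A_p$ condition on $|x|^{(2\alpha+1)-p(\alpha+1/2)}$ produces exactly the stated range of $p$. The only cosmetic difference is that the paper's sketch (in the proof of the weighted version, Proposition~\ref{prop:WCD}) handles the even part by citing the Kanjin--Prestini bound directly and invokes Prestini's estimate only for the odd part, whereas you apply Prestini uniformly to both; since the Kanjin--Prestini bound is itself a consequence of Prestini's estimate, the two routes are equivalent.
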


\subsection{Weighted estimates with weights from Muckenhoupt class $A_p$}
The next theorem is well-known, but we will provide its ``proof'' to give relevant references.
\begin{theorem}\label{thm:weights}
Let $p\in(1,\infty)$. Let $w\colon\RR\to[0,\infty)$ be a weight from the $A_p$ class. Let $T$ be one of the operators \eqref{eq:2:HL}--\eqref{eq:5:CH}. Then there exists a constant $C_{w,p}:=C_{w,p}(T)>0$ such that the weighted estimate
\begin{equation}\label{eq:6:weight}
    \big\|Tf\|_{L^p(\RR,w{\rm d}x)}\leq C_{w,p}\|f\|_{L^p(\RR,w{\rm d}x)},\quad f\in L^p(\RR,w{\rm d}x),
\end{equation}
holds.
\end{theorem}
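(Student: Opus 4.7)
The plan is to treat the four operators $T \in \{M_{\rm HL}, H, \calH_*, \calC\}$ from \eqref{eq:2:HL}--\eqref{eq:5:CH} separately, in each case reducing to a classical weighted norm inequality from the literature. For $T = M_{\rm HL}$, the statement is precisely Muckenhoupt's theorem from \cite{Mu1} characterizing the $A_p$ class, and for the Carleson--Hunt operator $T = \calC$ the required estimate for $w \in A_p$ is the weighted Carleson--Hunt theorem due to Hunt and Young. Both of these enter the proof purely as citations.

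For the maximal Hilbert transform $T = \calH_*$, I would combine the Hunt--Muckenhoupt--Wheeden theorem, which yields the weighted $L^p(w)$-boundedness of the principal-value Hilbert transform $\calH$ for $w \in A_p$, with the Cotlar-type pointwise inequality
\begin{equation*}
\calH_* f(x) \lesssim M_{\rm HL}(\calH f)(x) + M_{\rm HL} f(x).
\end{equation*}
The first case already controls $M_{\rm HL}$ on $L^p(w)$, so this reduction transfers boundedness from $\calH$ to its maximal truncations $\calH_*$.

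For the conjugate Hardy operator $T = H$, I would use that $Hf$ is an even function and depends only on the restriction of $|f|$ to $\RR_+$, so it suffices to prove the bound on $\RR_+$ with the symmetric weight $\bar{w}(x) := w(x) + w(-x)$, which still belongs to $A_p$. A Fubini computation shows that $H$ is dual, with respect to Lebesgue measure on $\RR_+$, to the Hardy averaging operator $\widetilde{T} g(y) := y^{-1} \int_0^y g(x)\,\mathrm{d}x$. Since $\widetilde{T} g(y) \le 2\, M_{\rm HL}(g \mathds{1}_{\RR_+})(y)$ pointwise and since $w \in A_p$ is equivalent to $w^{1-p'} \in A_{p'}$, the first case already gives the $L^{p'}(w^{1-p'})$-boundedness of $\widetilde{T}$, and duality delivers the desired $L^p(w)$-estimate for $H$.

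Since the proposal is essentially a compendium of classical results, no step presents a substantive obstacle; the only mild care required is in justifying the Cotlar reduction for $\calH_*$ and the duality step for $H$, both of which are entirely routine once the right references are in hand.
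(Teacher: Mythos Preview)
Your proposal is essentially correct and follows the same strategy as the paper: cite Muckenhoupt for $M_{\rm HL}$, Hunt--Young for $\calC$, reduce $\calH_\ast$ to $\calH$ and $M_{\rm HL}$ (the paper simply cites \cite[Chapter~9]{Graf2} here, but later in Section~3.1 uses exactly your Cotlar argument for the vector-valued extension), and handle $H$ by duality against an averaging operator dominated by $M_{\rm HL}$.

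There is, however, a slip in your treatment of $H$. The reduction ``it suffices to prove the bound on $\RR_+$ with the symmetric weight $\bar w(x) = w(x) + w(-x)$'' does not deliver what you need: since $Hf$ depends only on $f|_{\RR_+}$, after rewriting the left-hand side as $\|Hf\|_{L^p(\RR_+,\bar w)}$ you must bound this by $\|f\mathds{1}_{\RR_+}\|_{L^p(\RR_+,w)}$, whereas your duality on $\RR_+$ with weight $\bar w$ only yields a bound by $\|f\mathds{1}_{\RR_+}\|_{L^p(\RR_+,\bar w)}$, which can be strictly larger (take $f$ supported on $\RR_+$ and $w$ much larger on $\RR_-$ than on $\RR_+$; this is compatible with $w\in A_p$). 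The paper avoids this by dualizing directly on $\RR$: for $g$ with $\|g\|_{L^{p'}(\RR,w^{1-p'})}\le 1$ one has
\[
\int_{\RR} Hf(x)\,g(x)\,{\rm d}x \le \int_0^\infty |f(y)|\cdot\frac{1}{y}\int_{|x|<y} |g(x)|\,{\rm d}x\,{\rm d}y \lesssim \int_0^\infty |f(y)|\, M_{\rm HL}g(y)\,{\rm d}y,
\]
and then H\"older together with the $L^{p'}(w^{1-p'})$-bound for $M_{\rm HL}$ finishes. If you drop the symmetrization and run your Fubini/duality on $\RR$, your argument becomes identical to the paper's.
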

\begin{proof}
The weighted inequality for the maximal function $M_{\rm HL}$ is the primary reason for considering $A_p$ classes, and it was proven by Muckenhoupt \cite{Mu1}. The inequality \eqref{eq:6:weight} for the conjugate Hardy operator $H$ follows by duality and by the fact that $w\in A_p$ if and only if $w^{1-p'}\in A^{p'}$ where $1/p+1/p'=1$. Namely, one has
\begin{align*}
    \|Hf\|_{L^p(\RR,w{\rm d}x)}&=\sup_{g:\, \|g\|_{L^{p'}(\RR,w^{1-p'}{\rm d}x)}\leq1}\bigg|\int_{\RR}Hf(x)g(x){\rm d}x\bigg|\\
    &=\sup_{g:\, \|g\|_{L^{p'}(\RR,w^{1-p'}{\rm d}x)}\leq1}\int_{\RR}|f(y)|\frac{1}{y}\int_{|x|<y}|g(x)|{\rm d}x{\rm d}y
    \lesssim\|f\|_{L^p(\RR,w{\rm d}x)}\|M_{\rm HL}g\|_{L^{p'}(\RR,w^{1-p'}{\rm d}x)}.
\end{align*}
For the maximal Hilbert transform $\calH_\ast$, it follows from the general theory of weighted inequalities for singular integrals, see \cite[Chapter 9]{Graf2}. Finally, the weighted estimates for the Carleson operator $\calC$ were proven by Hunt and Young \cite{HuntYoung74}.
\end{proof}

Now, let $w\colon\RR\to[0,\infty)$ be a weight such that
\begin{equation}\label{eq:7:Apa}
    w(x)|x|^{2\alpha+1-p(\alpha+1/2)}\in A_p.
\end{equation}
The set of such weights will be called $A_p^\alpha$. Obviously, $A_p^{-1/2}=A_p$. The following result is a straightforward generalization of Theorem~\ref{thmKP}.
\begin{proposition}[Weighted Kanjin--Prestini estimate]\label{prop:romera}
Let $\alpha\geq-1/2$ and let $p\in(1,\infty)$. Let $w\colon\RR\to[0,\infty)$ be an even weight from $A_p^\alpha$. There exists a constant $C_{p,\alpha,w}>0$ such that for any $f\in L^p(\mathbb{R}_+,wx^{2\alpha+1}{\rm d}x)$, we have
\begin{equation}\label{wKPe}
    \big\|\widetilde{\calC}_{\ast}^\alpha f\big\|_{L^p(\RR_+,wx^{2\alpha+1}{\rm d}x)}\leq C_{p,\alpha,w}\|f\|_{L^p(\RR_+,wx^{2\alpha+1}{\rm d}x)}.
\end{equation}
\end{proposition}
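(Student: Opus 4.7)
The proposition is essentially a weighted repackaging of Prestini's pointwise estimate (Theorem~\ref{thm:Prestini}) combined with the weighted boundedness of the four ``classical'' operators \eqref{eq:2:HL}--\eqref{eq:5:CH} on $A_p$-weighted spaces (Theorem~\ref{thm:weights}). The plan is therefore to reduce \eqref{wKPe} to an inequality on $L^p(\RR,\tilde{w}\,{\rm d}x)$ for a suitable weight $\tilde w$, and then verify that the $A_p^\alpha$ condition on $w$ is exactly what is needed to place $\tilde w$ in the Muckenhoupt class $A_p$.

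Concretely, fix $f\in C_{\rm c}^\infty(\RR_+)$ and set $g(y):=y^{\alpha+1/2}f(y)$ for $y>0$, extended by $0$ to $\RR$. Taking the supremum over $t>0$ in Prestini's pointwise estimate yields, for $x\in\RR_+$,
\begin{equation*}
\widetilde{\calC}_\ast^\alpha f(x)\le C_\alpha\, x^{-(\alpha+1/2)}\bigl(M_{\rm HL}g(x)+Hg(x)+\calH_\ast g(x)+\calC g(x)\bigr).
\end{equation*}
Raising to the $p$-th power and integrating against $w(x)x^{2\alpha+1}\,\mathrm{d}x$ gives
\begin{equation*}
\|\widetilde{\calC}_\ast^\alpha f\|_{L^p(\RR_+,wx^{2\alpha+1}\mathrm{d}x)}^p
\lesssim \int_{\RR_+}\!\bigl(|M_{\rm HL}g|+|Hg|+|\calH_\ast g|+|\calC g|\bigr)^p(x)\,\tilde w(x)\,\mathrm{d}x,
\end{equation*}
where $\tilde w(x):=w(x)\,|x|^{2\alpha+1-p(\alpha+1/2)}$ (the weight is even because $w$ is, so it extends naturally to all of $\RR$).

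By the definition \eqref{eq:7:Apa} of $A_p^\alpha$, the weight $\tilde w$ lies in $A_p$. Hence, by Theorem~\ref{thm:weights}, each of the four operators $M_{\rm HL}$, $H$, $\calH_\ast$, $\calC$ is bounded on $L^p(\RR,\tilde w\,\mathrm{d}x)$, and the preceding display is bounded, up to a constant depending on $p,\alpha,w$, by
\begin{equation*}
\int_{\RR}|g(x)|^p\,\tilde w(x)\,\mathrm{d}x
=\int_{\RR_+}x^{p(\alpha+1/2)}|f(x)|^p\,w(x)\,x^{2\alpha+1-p(\alpha+1/2)}\,\mathrm{d}x
=\|f\|_{L^p(\RR_+,wx^{2\alpha+1}\mathrm{d}x)}^p,
\end{equation*}
using that $g$ is supported in $\RR_+$ in the first identity. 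This gives \eqref{wKPe} on the dense subset $C_{\rm c}^\infty(\RR_+)$, and a standard density argument extends it to all of $L^p(\RR_+,wx^{2\alpha+1}\mathrm{d}x)$.

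There is no serious obstacle: the only point to check carefully is that the algebraic normalization in the exponent of $|x|$ in $\tilde w$ matches exactly the $A_p^\alpha$ condition, which is how that class is defined in the first place. The proof is therefore little more than unwinding the weights, provided one has Prestini's pointwise estimate and the $A_p$-weighted bounds for the Carleson--Hunt operator (due to Hunt--Young) and for $M_{\rm HL}$, $H$, $\calH_\ast$.
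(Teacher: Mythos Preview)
Your proof is correct and follows precisely the route the paper takes: the paper's own proof is the single sentence ``This result follows from Theorems~\ref{thm:Prestini} and~\ref{thm:weights},'' and you have simply unpacked that sentence by making the change of weight $w(x)x^{2\alpha+1}\mapsto \tilde w(x)=w(x)|x|^{2\alpha+1-p(\alpha+1/2)}$ explicit and checking that the $A_p^\alpha$ condition is exactly $\tilde w\in A_p$.
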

\begin{proof}
This result follows from Theorems~\ref{thm:Prestini} and \ref{thm:weights}.
\end{proof}
This result was first proved by Romera \cite{Romera} in the mid-90s and generalizes the famous result of Herz \cite{Herz} (unweighted setting) and Andersen \cite{Andersen} (weighted setting). The result states that the Hankel transform partial sum operator $\widetilde{\calS}_{t}^\alpha$ is uniformly bounded on $L^p(\RR_+,wx^{2\alpha+1},dx)$ with $w\in A_p^\alpha$.

The results proven by El Kamel, Yacoub, and Romera suggests that the stated result holds true.
\begin{proposition}[Weighted Carleson--Dunkl estimate]\label{prop:WCD}
Let $\alpha\geq-1/2$ and let $p\in(1,\infty)$. Let $w\colon\RR\to[0,\infty)$ be an even weight from $A_p^\alpha$. There exists a constant $C_{p,\alpha,w}>0$ such that for any $f\in L^p(\mathbb{R},w|x|^{2\alpha+1}{\rm d}x)$, we have
\begin{equation}\label{eq8:wCD}
    \big\|{\calC}_{\ast}^\alpha f\big\|_{L^p(\RR,w|x|^{2\alpha+1}{\rm d}x)}\leq C_{p,\alpha,w}\|f\|_{L^p(\RR,w|x|^{2\alpha+1}{\rm d}x)}.
\end{equation}
\end{proposition}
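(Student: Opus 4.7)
The plan is to follow the strategy of El Kamel--Yacoub: use the decomposition \eqref{DunkHankel} to reduce the Dunkl partial sums to Hankel partial sums of orders $\alpha$ and $\alpha+1$, then invoke the weighted Kanjin--Prestini estimate (Proposition \ref{prop:romera}) twice.

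First, I would split $\mathds{1}_{[-t,t]}\calD_\alpha f$ into its even and odd parts by means of \eqref{DunkHankel}. Since $\calH_\alpha f_{\rm e}(|\cdot|)$ is even and $(\cdot)\calH_{\alpha+1}(f_{\rm o}/(\cdot))(|\cdot|)$ is odd on $\RR$, and since $\widecheck{\calD}_\alpha F(x)=\calD_\alpha F(-x)$, a second application of \eqref{DunkHankel} will produce the pointwise identity
\begin{equation*}
\calS_t^\alpha f(x) = \widetilde{\calS}_t^\alpha f_{\rm e}(|x|) + x\,\widetilde{\calS}_t^{\alpha+1}\bigl(f_{\rm o}/(\cdot)\bigr)(|x|),\qquad x\in\RR,\ t>0.
\end{equation*}
Passing to absolute values and taking the supremum over $t>0$ then yields the pointwise domination
\begin{equation*}
\calC_\ast^\alpha f(x)\leq \widetilde{\calC}_\ast^\alpha f_{\rm e}(|x|) + |x|\,\widetilde{\calC}_\ast^{\alpha+1}\bigl(f_{\rm o}/(\cdot)\bigr)(|x|).
\end{equation*}

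Next, I would estimate each term in $L^p(\RR,w|x|^{2\alpha+1}{\rm d}x)$. Evenness of $w$ lets me reduce the first term to $\RR_+$, where Proposition \ref{prop:romera} at order $\alpha$ with weight $w$ bounds it by $\|f_{\rm e}\|_{L^p(\RR_+,wx^{2\alpha+1}{\rm d}x)}$. For the second term, I would absorb the factor $|x|^p$ into the weight by writing
\begin{equation*}
|x|^p\cdot w(x)|x|^{2\alpha+1} = \tilde w(x)|x|^{2(\alpha+1)+1},\qquad \tilde w(x):=w(x)|x|^{p-2},
\end{equation*}
and observing that a one-line exponent check gives
\begin{equation*}
\tilde w(x)|x|^{2(\alpha+1)+1-p(\alpha+3/2)} = w(x)|x|^{2\alpha+1-p(\alpha+1/2)},
\end{equation*}
so $\tilde w\in A_p^{\alpha+1}$ exactly when $w\in A_p^\alpha$ (with $\tilde w$ evidently even). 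Proposition \ref{prop:romera} at order $\alpha+1$ applied to $f_{\rm o}/(\cdot)$ with weight $\tilde w$, combined with the identity $|f_{\rm o}(x)/x|^p\tilde w(x)x^{2(\alpha+1)+1}=|f_{\rm o}(x)|^p w(x)x^{2\alpha+1}$, then bounds the second term by $\|f_{\rm o}\|_{L^p(\RR_+,wx^{2\alpha+1}{\rm d}x)}$.

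Finally, combining the two estimates with the trivial bounds $\|f_{\rm e}\|_{L^p(\RR_+,wx^{2\alpha+1}{\rm d}x)}+\|f_{\rm o}\|_{L^p(\RR_+,wx^{2\alpha+1}{\rm d}x)}\lesssim\|f\|_{L^p(\RR,w|x|^{2\alpha+1}{\rm d}x)}$, which follow from the definitions of $f_{\rm e}, f_{\rm o}$ and the evenness of $w$, will yield \eqref{eq8:wCD}. I do not anticipate a serious obstacle: once the decomposition of $\calS_t^\alpha f$ is in place, the argument is driven by the weight-exponent identity $w\in A_p^\alpha\Leftrightarrow w(\cdot)|\cdot|^{p-2}\in A_p^{\alpha+1}$, which precisely compensates for the Hankel-order shift $\alpha\to\alpha+1$ forced by the odd component of $f$.
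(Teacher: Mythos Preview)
Your proposal is correct and follows essentially the same approach as the paper: decompose $\calS_t^\alpha f$ into Hankel partial sums at orders $\alpha$ and $\alpha+1$, pass to the Carleson--Hankel maximal operators, and control each piece with the weighted Kanjin--Prestini machinery. The only cosmetic difference is in the odd term: the paper applies Prestini's pointwise bound (Theorem~\ref{thm:Prestini}) at order $\alpha+1$ and then Theorem~\ref{thm:weights} directly, whereas you repackage the weight as $\tilde w=w|\cdot|^{p-2}\in A_p^{\alpha+1}$ and invoke Proposition~\ref{prop:romera} a second time---but since Proposition~\ref{prop:romera} is exactly Theorems~\ref{thm:Prestini} and~\ref{thm:weights} combined, the two treatments are identical in substance.
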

\begin{proof}
The proof is a simple repetition of the arguments given in \cite{EKY}. Below, we provide a sketch of the proof. Let $w\in A_p^\alpha$ be an even weight, and let $f\in C_{\rm c}^\infty(\RR)$. By \cite[Lemma 2.1]{EKY}, we see that
\begin{equation*}
    {\calS}_{t}^\alpha f(x)=\widetilde{\calS}_{t}^\alpha f_{\rm e}(|x|)+x\widetilde{\calS}_{t}^{\alpha+1}\big(f_{\rm o}/{(\cdot)}\big)(|x|),\quad x\in\RR,
\end{equation*}
where $f_{\rm o}/{(\cdot)}$ stands for the function $x \mapsto f_{\rm o}(x) / x$. Therefore, one can write
\begin{equation}\label{eq:decomposition}
    {\calC}_{\ast}^\alpha f(x)\leq \widetilde{\calC}_{\ast}^\alpha(f_{\rm e})(|x|)+|x|\widetilde{\calC}_{\ast}^{\alpha+1}\big(f_{\rm o}/{|\cdot|}\big)(|x|),\quad x\in\RR.
\end{equation}
Now, it is easy to see that
\begin{equation*}
    \big\|\widetilde{\calC}_{\ast}^\alpha(f_{\rm e})(|x|)\big\|_{L^p(\RR,w|x|^{2\alpha+1}{\rm d}x)}=2^{1/p}\big\|\widetilde{\calC}_{\ast}^\alpha(f_{\rm e})\big\|_{L^p(\RR_+,wx^{2\alpha+1}{\rm d}x)}
\end{equation*}
and by Proposition~\ref{prop:romera} we get that
\begin{equation*}
    \big\|\widetilde{\calC}_{\ast}^\alpha(f_{\rm e})(|x|)\big\|_{L^p(\RR,w|x|^{2\alpha+1}{\rm d}x)}\lesssim_\alpha\|f_{\rm e}\|_{L^p(\RR_+,wx^{2\alpha+1}{\rm d}x)}=\|f\|_{L^p(\RR,w|x|^{2\alpha+1}{\rm d}x)}.
\end{equation*}
Now, by using Theorem~\ref{thm:Prestini} with $\alpha+1$ we get

\begin{equation*}
    \Big\||x|\widetilde{\calC}_{\ast}^{\alpha+1}\big(f_{\rm o}/{|\cdot|}\big)(|x|)\Big\|_{L^p(\RR,w|x|^{2\alpha+1}{\rm d}x)}\lesssim_\alpha\big\|K(f_{\rm o}(\cdot)^{\alpha+1/2})\|_{L^p(\RR,w|x|^{2\alpha+1-p(\alpha+1/2)}{\rm d}x)},
\end{equation*}
where $K:=M_{\rm HL}+H+\calH_\ast+\calC$. Now, since the condition \eqref{eq:7:Apa} is satisfied, we see that \eqref{eq8:wCD} holds.
\end{proof}
The above result seems to be new despite the fact that its proof is a trivial adaptation of well-known arguments.
\begin{corollary}
Let $\alpha\geq-1/2$, $p\in(1,\infty)$, and let $w$ be an even weight from $A_p^\alpha$. Then, for any $f\in L^p(\mathbb{R},w|x|^{2\alpha+1}{\rm d}x)$, we have
\begin{equation*}
    S_t^\alpha f(x) \to f(x) \quad \text{as } t \to \infty, \quad \text{for a.e. } x \in \mathbb{R}.
\end{equation*}

In particular, when $w(x)=|x|^\beta$, we get that the partial sum of the Dunkl transform of $f\in L^p(\RR,|x|^{\beta+2\alpha+1}{\rm d}x)$ converges almost everywhere when
\begin{equation*}
    -1<\beta+(\alpha+1/2)(2-p)<p-1.
\end{equation*}
\end{corollary}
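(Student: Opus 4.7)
The plan is to invoke the classical Banach-principle reduction: the weighted maximal estimate from Proposition~\ref{prop:WCD} controls the pointwise oscillation at infinity, and combined with pointwise convergence on a dense subclass it upgrades to almost everywhere convergence on the whole space.

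\textbf{Step 1 (Convergence on a dense subclass).} I would take as dense subclass $C_{\rm c}^\infty(\RR)\subset L^p(\RR,w|x|^{2\alpha+1}{\rm d}x)$. For $f\in C_{\rm c}^\infty(\RR)$, standard Plancherel--inversion theory of the Dunkl transform (cf.\ \cite{NowakStempak}) shows that $\calD_\alpha f$ is a rapidly decreasing smooth function and in particular lies in $L^1(\RR,|y|^{2\alpha+1}{\rm d}y)$. Since the Dunkl kernel $\tfrac{1}{2(yx)^\alpha}\bigl(J_\alpha(yx)-iJ_{\alpha+1}(yx)\bigr)$ is uniformly bounded in $y$ for each fixed $x\in\RR$, dominated convergence yields
\begin{equation*}
\calS_t^\alpha f(x)=\widecheck{\calD}_\alpha\bigl(\mathds{1}_{[-t,t]}\calD_\alpha f\bigr)(x)\longrightarrow\widecheck{\calD}_\alpha(\calD_\alpha f)(x)=f(x)\quad\text{as }t\to\infty,
\end{equation*}
for every $x\in\RR$. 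Density of $C_{\rm c}^\infty(\RR)$ in $L^p(\RR,w|x|^{2\alpha+1}{\rm d}x)$ reduces to local integrability of $w|x|^{2\alpha+1}$, which holds because $w|x|^{2\alpha+1-p(\alpha+1/2)}\in A_p\subset L^1_{\rm loc}(\RR)$ and the factor $|x|^{p(\alpha+1/2)}$ is locally bounded (recall $\alpha\ge-1/2$).

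\textbf{Step 2 (Banach principle).} For arbitrary $f\in L^p(\RR,w|x|^{2\alpha+1}{\rm d}x)$, set
\begin{equation*}
Ef(x):=\limsup_{t\to\infty}\bigl|\calS_t^\alpha f(x)-f(x)\bigr|.
\end{equation*}
The pointwise bound $Ef\le\calC_\ast^\alpha f+|f|$ together with Proposition~\ref{prop:WCD} gives
\begin{equation*}
\|Ef\|_{L^p(\RR,w|x|^{2\alpha+1}{\rm d}x)}\lesssim\|f\|_{L^p(\RR,w|x|^{2\alpha+1}{\rm d}x)}.
\end{equation*}
Given $\varepsilon>0$, I split $f=g+h$ with $g\in C_{\rm c}^\infty(\RR)$ and $\|h\|_{L^p(\RR,w|x|^{2\alpha+1}{\rm d}x)}<\varepsilon$. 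Subadditivity of $\limsup$ combined with $Eg\equiv 0$ (from Step~1) gives $Ef\le Eh$ pointwise, hence $\|Ef\|_{L^p(\RR,w|x|^{2\alpha+1}{\rm d}x)}\lesssim\varepsilon$. Letting $\varepsilon\to 0$ forces $Ef=0$ almost everywhere, which is the claimed convergence.

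\textbf{Step 3 (Power weights) and main obstacle.} For $w(x)=|x|^\beta$, the condition $|x|^\beta\in A_p^\alpha$ unfolds to $|x|^{\beta+2\alpha+1-p(\alpha+1/2)}\in A_p$, which by the textbook characterization $|x|^\gamma\in A_p\iff-1<\gamma<p-1$ is exactly the stated range. The only genuine technical subtlety is defining $\calS_t^\alpha f$ for rough $f\in L^p$, since the defining integral for $\calD_\alpha f$ need not converge absolutely; this is handled by the usual density extension, taking $f_n\to f$ in $L^p$ with $f_n\in C_{\rm c}^\infty(\RR)$ and using Proposition~\ref{prop:WCD} to see that $(\calS_t^\alpha f_n)_{t>0}$ is Cauchy in $L^p(\ell^\infty_t)$, which produces an a.e.\ representative of $\calS_t^\alpha f$ for which the pointwise bound $|\calS_t^\alpha f|\le\calC_\ast^\alpha f$ used in Step~2 remains valid.
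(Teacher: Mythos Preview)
Your proposal is correct and follows exactly the standard route that the paper leaves implicit: the corollary is stated without proof immediately after Proposition~\ref{prop:WCD}, precisely because the passage from the weighted maximal inequality to almost everywhere convergence is the classical Banach-principle argument you have written out. Your verification that $C_{\rm c}^\infty(\RR)$ is dense (via local integrability of $w|x|^{2\alpha+1}$) and your translation of $|x|^\beta\in A_p^\alpha$ into the stated range for $\beta$ are both accurate, so there is nothing to add.
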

If we take $\alpha=-1/2$ in Proposition~\ref{prop:WCD}, we get that for $w\in A_p$ the Carleson operator is bounded on $L^p(\RR,wdx)$ to itself. For $p=2$, we get that the Carleson--Dunkl ${\calC}_{\ast}^\alpha$ operator is bounded on $L^2(\RR,w|x|^{2\alpha+1}{\rm d}x)$ if $w$ is an even weight from $A_2$. One may ask the question of whether the condition $w\in A_p^\alpha$ is optimal in any sense. This does not seem to be the case even when $p=2$. Let us consider $w_{a,b}(x)=|x|^a(1+|x|)^{b-a}$. Then $w_{a,b}\in A_2$ if and only if $a,b\in(-1,1)$. On the other hand Betancor, Ciaurri and Varona \cite{BCV} proved that 
\begin{equation*}
    \big\|\widecheck{\calD}_{\alpha}\big(\mathds{1}_{[-1,1]} {\calD}_\alpha f\big)\big\|_{L^2(\RR,w_{a,b}|x|^{2\alpha+1}){\rm d}x}\lesssim\|f\|_{L^2(\RR,w_{a,b}|x|^{2\alpha+1}{\rm d}x)}
\end{equation*}
if and only if
\begin{equation*}
    -(2\alpha+2)<a<2\alpha+2\quad \text{and}\quad -1<b<1.
\end{equation*}
This suggests that $w\in A_p^\alpha$ is not optimal even in the case $p=2$. 

A natural candidate for the condition for which the weighted inequality \eqref{eq8:wCD} holds is the Muckenhoupt condition with an appropriate measure. Namely, we conjecture the following.
\begin{conj}
Let $w\colon\RR\to[0,\infty)$ satisfy the following condition
\begin{equation*}
    \sup_{B}\Big(\frac{1}{|B|}\int_B w(x)|x|^{2\alpha+1}{\rm d}x\Big)\Big(\frac{1}{|B|}\int_Bw(x)^{-p'/p}|x|^{2\alpha+1}{\rm d}x\Big)^{p/p'}<\infty,
\end{equation*}
where the supremum is taken over all intervals $B\subset\RR$ and $1/p+1/p'=1$. Then the inequality
\begin{equation*}
    \big\|{\calC}_{\ast}^\alpha f\big\|_{L^p(\RR,w|x|^{2\alpha+1}{\rm d}x)}\leq C_{p,\alpha,w}\|f\|_{L^p(\RR,w|x|^{2\alpha+1}{\rm d}x)}
\end{equation*}
holds for any $f\in L^p(\mathbb{R},w|x|^{2\alpha+1}{\rm d}x)$.
\end{conj}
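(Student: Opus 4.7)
The natural first move is to mimic the proof of Proposition~\ref{prop:WCD}: use the even/odd decomposition \eqref{eq:decomposition} to reduce $\calC_\ast^\alpha$ to the Hankel--Carleson operators $\widetilde{\calC}_\ast^\alpha$ and $\widetilde{\calC}_\ast^{\alpha+1}$, and then attempt to invoke Prestini's pointwise estimate from Theorem~\ref{thm:Prestini}. However, Prestini's bound carries the factor $|x|^{-(\alpha+1/2)}$, which, after the change of variable $g(y):=y^{\alpha+1/2}f(y)$, trades the measure $w|x|^{2\alpha+1}\dif x$ for $w|x|^{2\alpha+1-p(\alpha+1/2)}\dif x$ and therefore forces one back into the class $A_p^\alpha$ rather than the conjectural class. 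This is the first obstruction to be removed.

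The remedy is to establish a Prestini-type pointwise estimate that stays intrinsic to the space of homogeneous type $(\RR,\dif\mu_\alpha)$ with $\dif\mu_\alpha(x):=|x|^{2\alpha+1}\dif x$. Concretely, the goal would be a pointwise bound of the form
\begin{equation*}
    |\calS_t^\alpha f(x)| \lesssim_\alpha \bigl(M_\alpha + H_\alpha + \calH_\alpha^\ast + \calC_\alpha\bigr)f(x),
\end{equation*}
where $M_\alpha$ is the Hardy--Littlewood maximal function on $(\RR,\dif\mu_\alpha)$, $H_\alpha$ and $\calH_\alpha^\ast$ are the Hardy and maximal Hilbert operators formed with Dunkl translation, and $\calC_\alpha$ is a Carleson--Hunt-type maximal operator incorporating the generalised exponentials that appear in the kernel of $\calD_\alpha$. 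For $M_\alpha$, $H_\alpha$ and $\calH_\alpha^\ast$ the weighted theory on $(\RR,\dif\mu_\alpha)$ is within reach of the standard Calder\'on--Zygmund machinery on spaces of homogeneous type, and a computation parallel to Theorem~\ref{thm:weights} should deliver weighted bounds under the conjectural Muckenhoupt condition (one has to check that Dunkl translation produces a genuine Calder\'on--Zygmund kernel in $(\RR,\dif\mu_\alpha)$ in the relevant range of $\alpha$, but this is classical).

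The genuinely difficult step is the Carleson piece $\calC_\alpha$: one must prove, first, an unweighted $L^p(\RR,\dif\mu_\alpha)$ bound for a Carleson--Hunt operator adapted to the Dunkl modulations, and then a weighted version against the conjectural class. I expect this to be the main obstacle, since Prestini's original proof (and Romera--Soria's reworking) reduce to the classical Carleson operator precisely because of the $|x|^{-(\alpha+1/2)}$ factor discussed above. Two routes look promising. The first is to attempt a sparse domination for $\calC_\ast^\alpha$ directly on $(\RR,\dif\mu_\alpha)$, in the spirit of the variational sparse bounds of Di Plinio--Do--Uraltsev~\cite{DDU}; such a sparse bound would automatically yield weighted estimates for every Muckenhoupt-type class, including the conjectural one. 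The second is to attempt a Rubio de Francia extrapolation adapted to $(\RR,\dif\mu_\alpha)$, starting from a single weighted $L^{p_0}$ bound for $\calC_\ast^\alpha$; even this starting weighted estimate, however, requires genuinely new time--frequency input beyond what is currently available, and constitutes the step where I would expect the argument to stall.
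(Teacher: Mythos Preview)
The statement you are addressing is a \emph{conjecture} in the paper, not a proved result; the paper offers no proof and explicitly presents it as open. So there is no ``paper's own proof'' to compare against, and your write-up is not a proof either---it is, quite appropriately, a diagnosis of why the obvious strategy fails and a sketch of possible attack routes.

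Your analysis of the obstruction is accurate and matches the paper's implicit reasoning: the Prestini pointwise bound (Theorem~\ref{thm:Prestini}) introduces the factor $|x|^{-(\alpha+1/2)}$, which after the substitution $g=(\cdot)^{\alpha+1/2}f$ forces the weight condition to be the classical $A_p$ condition on $w(x)|x|^{2\alpha+1-p(\alpha+1/2)}$, i.e.\ exactly $w\in A_p^\alpha$, rather than the conjectural $A_p$ condition relative to the measure $|x|^{2\alpha+1}\dif x$. This is precisely why the paper can only prove Proposition~\ref{prop:WCD} and must leave the stronger statement as a conjecture.

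Your proposed remedies (an intrinsic Prestini estimate on $(\RR,\dif\mu_\alpha)$, sparse domination for $\calC_\ast^\alpha$, or extrapolation on the homogeneous space) are reasonable directions, and you correctly identify the Carleson piece $\calC_\alpha$ as the genuine bottleneck: there is currently no weighted time--frequency theory for the Dunkl setting, and neither sparse domination nor extrapolation can proceed without at least one weighted endpoint for a maximally modulated Dunkl singular integral. Your closing admission that ``the argument would stall'' there is honest and correct; as written, this is not a proof but a research programme, which is all one can expect for an open conjecture.
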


\subsection{Modified Dunkl and Hankel transform}
Recall that for given $\alpha\geq-1/2$ \textit{the modified Hankel transform} of order $\alpha$ of a suitable function $f$ on $\RR_+$ is defined by
\begin{equation}\label{modHankel}
	H_\alpha f(x)=\int_0^\infty{f(y)(xy)^{1/2}J_\alpha(xy){\rm d}x}, \quad x\in\RR_+.
\end{equation}
Motivated by the formula \eqref{DunkHankel} Nowak and Stempak \cite{NowakStempak} have introduced \textit{the modified Dunkl transform} as
\begin{equation}\label{modedunkl}
	    D_\alpha f(x):=H_\alpha(f_{\rm e})(|x|)-i\sgn(x) H_{\alpha+1}(f_{\rm o})(|x|),\quad x\in\RR.
\end{equation}
The modified Dunkl transform exhibits similar properties to the standard one. Namely, the operator $D_\alpha$ extends uniquely to an isometric isomorphism on $L^2(\mathbb{R}, \mathrm{d}x)$, which satisfies the Plancherel identity
\begin{equation*}
    \|D_\alpha f\|_{L^2(\mathbb{R}, \mathrm{d}x)} = \|f\|_{L^2(\mathbb{R}, \mathrm{d}x)}, \quad f \in L^2(\mathbb{R}, \mathrm{d}x).
\end{equation*}
Moreover, its inverse is given by
\begin{equation*}
    \widecheck{D}_\alpha f(x) := D_\alpha f(-x), \quad x \in \mathbb{R}.
\end{equation*}
Then for $f \in C_{\mathrm{c}}^\infty(\mathbb{R})$, we have $(\widecheck{D}_\alpha \circ D_\alpha) f = (D_\alpha \circ \widecheck{D}_\alpha) f = f$. Proofs of these facts may be found in \cite[Proposition 1.3]{NowakStempak}.

The standard Dunkl transform and its modified counterpart are related to each other by the following relation:
\begin{equation}\label{dunksrelation}
    \mathcal{D}_\alpha = M_{-(\alpha+1/2)} \circ D_\alpha \circ M_{\alpha+1/2},
\end{equation}
where for any $a \in \mathbb{R}$, the operator $M_a f(x) := |x|^a f(x)$ is the multiplication operator.

The key feature of the modified Dunkl transform is the fact that for any $\alpha \geq -\frac{1}{2}$, the operator $D_\alpha$ acts on the same space, in contrast to the standard Dunkl transform, which requires the measure $|x|^{2\alpha+1} \, \mathrm{d}x$.
\subsection{Oscillation seminorms for projection operators}
In the case of seminorms, we will follow the notation from \cite{MSS} and \cite{MSW}. Let $(P_t)_{t \in \II}$ be a family of projections; that is, linear operators satisfying
\begin{align}\label{eq:10}
P_s P_t = P_{\min\{s,t\}}.
\end{align}
It is easy to see that the partial sum operators $\calS_t^\alpha$ are projection operators in the above sense. The next result, proven by Mirek, Szarek, and Wright, shows that the oscillation estimates can be deduced from certain vector-valued estimates.

\begin{theorem}{\cite[Theorem 3.1]{MSW}} \label{thm:p}
Let  $\II\subseteq \RR$ be such that $\#\II\ge2$. Let $(P_t)_{t \in \II}$ be a family of projections. If the set $\II$ is uncountable then we assume in addition that $\II\ni t\mapsto P_tf$ is
continuous $\mu$-almost everywhere on $X$. Let $p\in(1, \infty)$ be fixed. Suppose that
$P_t$ are bounded on $L^p(\RR,{\rm d}\mu)$, and suppose that the
following two estimates hold
\begin{align} \label{1.15}
\sup_{J\in\ZZ_+}\sup_{I\in \mathfrak{S}_J(\II)} 
\norm[\Big]{ \Big( \sum_{j=0}^{J-1} 
\abs{ (P_{I_{j+1}}- P_{I_{j}}) f}^2 \Big)^{1/2} }_{L^p(\RR,{\rm d}\mu)}
\lesssim_{p} 
\norm{f}_{L^p(\RR,{\rm d}\mu)}, \qquad f \in L^p(\RR,{\rm d}\mu),
\end{align}
and the vector-valued estimate
\begin{align} \label{1.2}
\norm[\Big]{ \Big( \sum_{j \in \ZZ} \sup_{t \in \II} \abs{P_{t} f_j}^2 \Big)^{1/2} }_{L^p(\RR,{\rm d}\mu)} 
\lesssim_{p} 
\norm[\Big]{ \Big( \sum_{j \in \ZZ} \abs{f_j}^2 \Big)^{1/2} }_{L^p(\RR,{\rm d}\mu)}, \qquad (f_j)_{j\in\ZZ}\in L^p(\RR,{\rm d}\mu;\ell^2(\ZZ)).
\end{align}
Then the following oscillation estimate holds: 
\begin{align} \label{1.3}
\sup_{J \in \ZZ_+} \sup_{ I \in \mathfrak S_{J}(\II) }
\norm{ O^2_{I, J} ( P_{t} f : t \in \II) }_{L^p(\RR,{\rm d}\mu)} 
\lesssim_{p}
\norm{ f }_{L^p(\RR,{\rm d}\mu)}, 
\qquad f \in L^p(\RR,{\rm d}\mu).
\end{align}
\end{theorem}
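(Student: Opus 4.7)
The plan is to exploit the projection identity $P_sP_t = P_{\min\{s,t\}}$ to reduce the oscillation seminorm to the two ingredients already present as hypotheses: the jump square function bound \eqref{1.15} and the vector-valued maximal bound \eqref{1.2}. The key algebraic observation is that, for $I_j \le t \le I_{j+1}$, the telescoping block $g_j := (P_{I_{j+1}} - P_{I_j})f$ satisfies
\[
P_t g_j = P_{\min\{t,I_{j+1}\}} f - P_{\min\{t, I_j\}} f = P_t f - P_{I_j} f,
\]
since $t \le I_{j+1}$ and $t \ge I_j$ collapse the two minima. In particular, the local oscillation of $P_t f$ around the anchor $P_{I_j} f$ is realised entirely by the action of the family $(P_t)_{t\in\II}$ on the single function $g_j$, which depends only on the two adjacent anchors.

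From this identity I would deduce the pointwise bound
\[
O_{I,J}^2\bigl(P_t f : t\in\II\bigr)(x) \le \Bigl(\sum_{j=1}^J \sup_{t\in\II}\abs{P_t g_j(x)}^2\Bigr)^{1/2},
\]
enlarging the inner supremum from the block $[I_j, I_{j+1})$ to the whole index set $\II$. Taking the $L^p(\RR,{\rm d}\mu)$ norm and applying the vector-valued maximal hypothesis \eqref{1.2} to the finite sequence $(g_j)_{j=1}^J$ (viewed as an $\ell^2(\ZZ)$-valued function by padding with zeros outside $\{1,\dots,J\}$) yields
\[
\bigl\|O_{I,J}^2(P_t f : t\in\II)\bigr\|_{L^p(\RR,{\rm d}\mu)} \lesssim_p \Bigl\|\Bigl(\sum_{j=1}^J \abs{g_j}^2\Bigr)^{1/2}\Bigr\|_{L^p(\RR,{\rm d}\mu)}.
\]

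Finally I would invoke the square function hypothesis \eqref{1.15} directly on the right-hand side, since the $g_j$ are exactly the telescoping blocks appearing there; this gives the desired bound by $\|f\|_{L^p(\RR,{\rm d}\mu)}$ uniformly in $J\in\ZZ_+$ and in $I \in \mathfrak{S}_J(\II)$. The only non-algebraic subtlety concerns the measurability of $(x,t) \mapsto P_t g_j(x)$ and of the supremum in $t$ when $\II$ is uncountable; this is precisely why the statement includes the $\mu$-almost everywhere continuity of $t \mapsto P_t f$, which permits one to replace the supremum over $\II$ by a supremum over a countable dense subset. I do not foresee any serious obstacle beyond this bookkeeping, since the projection identity trivialises the passage from oscillations to vector-valued maximals of consecutive differences.
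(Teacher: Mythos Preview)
Your argument is correct, and it is exactly the standard proof of this statement: the projection identity rewrites $P_tf-P_{I_j}f$ as $P_t g_j$ on each block, the enlarged supremum feeds into \eqref{1.2}, and the resulting square function is handled by \eqref{1.15}. Note, however, that the paper does not supply its own proof here at all; the theorem is quoted verbatim from \cite[Theorem~3.1]{MSW} and used as a black box, so there is no in-paper argument to compare yours against.
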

Now we are ready to give the proof of Proposition~\ref{prop:weight:carle}
\begin{proof}[Proof of Proposition~\ref{prop:weight:carle}]
By Theorem~\ref{thm:p}, to establish the first part, we need to show two estimates:
For any \( w \in A_p \), we have the vector-valued estimate for the Carleson operator,
\begin{equation}\label{eq:prop1.8proof}
    \Big\|\Big(\sum_{n\in\ZZ}|\calC f_n|^2\Big)^{1/2}\Big\|_{L^p(\RR,w\,{\rm d}x)} \leq C_{p,w} \Big\|\Big(\sum_{n\in\ZZ}|f_n|^2\Big)^{1/2}\Big\|_{L^p(\RR,w\,{\rm d}x)}
\end{equation}
and for any \( p \in (2,\infty) \) and any \( w \in A_{p/2} \) we have the Rubio de Francia estimate
\begin{equation}\label{eq:prop1.8proof2}
\Big\|\Big(\sum_{j\in\ZZ}\big|\calF^{-1}(\mathds{1}_{I_j} \calF f)\big|^2\Big)^{1/2}\Big\|_{L^p(\RR,\,w{\rm d}x)} \lesssim \|f\|_{L^p(\RR,\,w{\rm d}x)}, \quad f \in L^p(\RR,\,w{\rm d}x),
\end{equation}
where $(I_j:j\in\ZZ)\subset\RR$ is a family of pairwise disjoint intervals.

The vector-valued estimate for the Carleson operator was proved by Grafakos, Martell, and Soria \cite{GMS}. On the other hand, the inequality \eqref{eq:prop1.8proof2} was proved by Rubio de Francia \cite{RdF}. Moreover, when \( w(x) = |x|^\beta \), Rubio de Francia \cite[Theorem 4.2]{RdF} showed that \eqref{eq:prop1.8proof2} holds for \( p = 2 \) and \( -1 < \beta \leq 0 \).

For the second part of the theorem, we need to prove \eqref{eq:prop1.8proof2} but with the dyadic intervals \( I_j := (2^{k_j}, 2^{k_{j+1}}) \), \( k_j, k_{j+1} \in \ZZ \) and \( w \in A_p \). This follows from the weighted Littlewood--Paley theory established by Kurtz \cite{Kurtz}.
\end{proof}

In a similar manner, in order to prove Theorem~\ref{main:oscil:dunkl} we need to show two estimates:
\begin{equation}\label{eq9:vector-va}
    \Big\|\Big(\sum_{n\in\ZZ}\sup_{t>0} |\calS_{t}^\alpha f_n|^2\Big)^{1/2}\Big\|_{L^p(\RR,w|x|^{2\alpha+1}{\rm d}x)}\lesssim_{\alpha,w}\Big\|\Big(\sum_{n\in\ZZ}|f_n|^2\Big)^{1/2}\Big\|_{L^p(\RR,w|x|^{2\alpha+1}{\rm d}x)},
\end{equation}
where $w\in A_p^\alpha$ is an even weight and $(f_n)_{n\in\ZZ}\in L^p(\RR, w|x|^{2\alpha+1}{\rm d}x;\ell^2(\ZZ))$ and
\begin{equation}\label{RdF eq:1}
\Big\|\Big(\sum_{j\in\ZZ}\big|\widecheck{\calD}_\alpha(\mathds{1}_{I_j} \calD_\alpha f)\big|^2\Big)^{1/2}\Big\|_{L^p(\RR,|x|^{\beta+2\alpha+1}{\rm d}x)}\lesssim\|f\|_{L^p(\RR,|x|^{\beta+2\alpha+1}{\rm d}x)}.
\end{equation}
where $(I_j:j\in\ZZ)\subset\RR$ is a family of pairwise disjoint intervals and 
\begin{equation*}
    -1<\beta+(\alpha+1/2)(2-p)<p/2-1.
\end{equation*}
For the second part of Theorem~\ref{main:oscil:dunkl} we need to show \eqref{RdF eq:1} but with disjoint dyadic intervals $I_j:=(2^{k_j},2^{k_{j+1}})$, $k_j,k_{j+1}\in\ZZ$ and
\begin{equation*}
    -1<\beta+(\alpha+1/2)(2-p)<p-1.
\end{equation*}
We will establish \eqref{eq9:vector-va} and \eqref{RdF eq:1} in Section 3.

\section{Oscillation inequality for partial sums of Dunkl transform}

\subsection{Vector-valued estimate \eqref{eq9:vector-va}}
This estimate is a consequence of weighted vector-valued estimates for operators \eqref{eq:2:HL}--\eqref{eq:5:CH}. Namely, by \eqref{eq:decomposition} we see that
\begin{equation*}
   |\calC_{\ast}^\alpha f_n|(x)\leq \widetilde{\calC}_{\ast}^\alpha(f_{n,{\rm e}})(|x|)+|x|\widetilde{\calC}_{\ast}^{\alpha+1}\big(f_{n,{\rm o}}/{|\cdot|}\big)(|x|),
\end{equation*}
where $f_{n,{\rm e}}$ and $f_{n,{\rm o}}$ are the even and odd part of $f_n$, respectively. By Theorem~\ref{thm:Prestini} we see that
\begin{equation*}
    |\calC_{\ast}^\alpha f_n|(x)\lesssim_\alpha|x|^{-(\alpha+1/2)}\big(K((\cdot)^{\alpha+1/2})f_{n,{\rm e}})+K((\cdot)^{\alpha+1/2})f_{n,{\rm o}})\big)(|x|),
\end{equation*}
where $K:=M_{\rm HL}+H+\calH_\ast+\calC$. Consequently, in order to show \eqref{eq9:vector-va} it is enough to establish
\begin{equation}\label{eq10:vector-va}
\Big\|\Big(\sum_{n\in\ZZ}|Kf_n|^2\Big)^{1/2}\Big\|_{L^p(\RR,w|x|^{2\alpha+1-p(\alpha+1/2)}{\rm d}x)}\lesssim_{\alpha,w}\Big\|\Big(\sum_{n\in\ZZ}|f_n|^2\Big)^{1/2}\Big\|_{L^p(\RR,w|x|^{2\alpha+1-p(\alpha+1/2)}{\rm d}x)},
\end{equation}
for any sequence of functions $(f_n)_{n\in\ZZ}\in{L^p(\RR,w|x|^{2\alpha+1-p(\alpha+1/2)}{\rm d}x;\ell^2(\ZZ))}$. Since the operator $K$ is the sum of four operators, it is sufficient to show an appropriate estimate for each component. Let $w\in A_p$. The weighted vector-valued estimate for the Hardy--Littlewood maximal function $M_{\rm HL}$, 
\begin{equation*}
    \Big\|\Big(\sum_{n\in\ZZ}|M_{\rm HL}f_n|^2\Big)^{1/2}\Big\|_{L^p(\RR,w{\rm d}x)}\leq C_{p,w}\Big\|\Big(\sum_{n\in\ZZ}|f_n|^2\Big)^{1/2}\Big\|_{L^p(\RR,w{\rm d}x)}
\end{equation*}
was proven by Andersen and John \cite[Theorem 3.1]{AndersenJohn80} and independently by Kokilashvili \cite{Kokilasvili}. The vector-valued estimate for the Hardy operator $H$,
\begin{equation*}
\Big\|\Big(\sum_{n\in\ZZ}|Hf_n|^2\Big)^{1/2}\Big\|_{L^p(\RR,w{\rm d}x)}\leq C_{p,w}\Big\|\Big(\sum_{n\in\ZZ}|f_n|^2\Big)^{1/2}\Big\|_{L^p(\RR,w{\rm d}x)}
\end{equation*}
follows by the weighted estimate \eqref{eq:6:weight} and the general $\ell^2$-extension theorem \cite[Theorem 5.5.1]{Grafakos}, applied to the linear version of $H$ without the absolute value in the definition. By the same argument, we get that for the Hilbert transform
\begin{equation*}
    \calH f(x):={\rm p.v.}\int_{\RR}\frac{f(x-y)}{y}{\rm d}y,\quad x\in\RR,
\end{equation*}
the vector-valued estimate
\begin{equation*}
\Big\|\Big(\sum_{n\in\ZZ}|\calH f_n|^2\Big)^{1/2}\Big\|_{L^p(\RR,w{\rm d}x)}\leq C_{p,w}\Big\|\Big(\sum_{n\in\ZZ}|f_n|^2\Big)^{1/2}\Big\|_{L^p(\RR,w{\rm d}x)}
\end{equation*}
holds as well. Then the vector-valued estimate for the maximal Hilbert transform $\calH^\ast$ follows by Cotlar's inequality \cite[Theorem 5.3.4]{Grafakos}
\begin{equation*}
    \calH^\ast f(x)\lesssim M_{\rm HL}(\calH f)(x)+ M_{\rm HL}f(x),\quad x\in \RR.
\end{equation*}
Finally, the vector-valued estimate for the Carleson operator \eqref{eq:prop1.8proof} was proved by Grafakos, Martell and Soria \cite{GMS}. Now, considering the above estimates and the fact that $w \in A_\alpha^p$, the estimate \eqref{eq9:vector-va} follows.

A straightforward consequence of \eqref{eq9:vector-va} is the $\ell^2$-extension of the weighted Kanjin--Prestini estimate \eqref{wKPe}.
\begin{corollary}\label{cor:hankelsum}
Let $\alpha\geq-1/2$ and let $p\in(1,\infty)$. Let $w\colon\RR\to[0,\infty)$ be an even weight from $A_p^\alpha$. There exists a constant $C_{p,\alpha,w}>0$ such that for any sequence of functions from $L^p(\RR_+,wx^{2\alpha+1}{\rm d}x)$ we have 
\begin{equation*}
    \Big\|\Big(\sum_{n\in\ZZ}\sup_{t>0} |\widetilde{\calS}_{t}^\alpha f_n|^2\Big)^{1/2}\Big\|_{L^p(\RR_+,wx^{2\alpha+1}{\rm d}x)}\leq C_{p,\alpha,w}\Big\|\Big(\sum_{n\in\ZZ}|f_n|^2\Big)^{1/2}\Big\|_{L^p(\RR_+,wx^{2\alpha+1}{\rm d}x)}.
\end{equation*}
\end{corollary}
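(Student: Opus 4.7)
The plan is to transfer the vector-valued maximal inequality \eqref{eq9:vector-va} for the Dunkl partial sums $\calS_t^\alpha$ on $L^p(\RR,w|x|^{2\alpha+1}{\rm d}x)$ directly to the Hankel partial sums $\widetilde\calS_t^\alpha$ on $L^p(\RR_+,wx^{2\alpha+1}{\rm d}x)$ by exploiting the symmetrization identity used in the proof of Proposition~\ref{prop:WCD} and the evenness of the weight.

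First, given a sequence $(f_n)_{n\in\ZZ}$ of functions on $\RR_+$ belonging to $L^p(\RR_+,wx^{2\alpha+1}{\rm d}x;\ell^2(\ZZ))$, I extend each $f_n$ to an even function $F_n$ on $\RR$ by setting $F_n(x):=f_n(|x|)$. Then $F_{n,{\rm e}}=f_n$ on $\RR_+$ and $F_{n,{\rm o}}\equiv 0$. Invoking the identity
\begin{equation*}
\calS_t^\alpha F_n(x)=\widetilde\calS_t^\alpha F_{n,{\rm e}}(|x|)+x\,\widetilde\calS_t^{\alpha+1}\bigl(F_{n,{\rm o}}/(\cdot)\bigr)(|x|),\quad x\in\RR,
\end{equation*}
recorded in the proof of Proposition~\ref{prop:WCD}, I obtain
\begin{equation*}
\calS_t^\alpha F_n(x)=\widetilde\calS_t^\alpha f_n(|x|),\quad x\in\RR,\ t>0.
\end{equation*}

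Next, since both $w$ and the density $|x|^{2\alpha+1}$ are even, and $\sup_{t>0}|\calS_t^\alpha F_n(x)|=\sup_{t>0}|\widetilde\calS_t^\alpha f_n(|x|)|$ is an even function of $x$, a change of variables yields
\begin{equation*}
\Big\|\Big(\sum_{n\in\ZZ}\sup_{t>0}|\calS_t^\alpha F_n|^2\Big)^{1/2}\Big\|_{L^p(\RR,w|x|^{2\alpha+1}{\rm d}x)}^p
=2\,\Big\|\Big(\sum_{n\in\ZZ}\sup_{t>0}|\widetilde\calS_t^\alpha f_n|^2\Big)^{1/2}\Big\|_{L^p(\RR_+,wx^{2\alpha+1}{\rm d}x)}^p,
\end{equation*}
and the same identity (with the sup removed) relates the corresponding right-hand sides. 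Applying the already-established estimate \eqref{eq9:vector-va} to the sequence $(F_n)_{n\in\ZZ}$ and cancelling the factor $2^{1/p}$ on both sides yields the claim.

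There is no real obstacle here: the argument is a purely symmetric transference, and all the analytic work has already been absorbed into \eqref{eq9:vector-va}. The only point worth double-checking is that the hypothesis $w\in A_p^\alpha$ even is exactly what was used to obtain \eqref{eq9:vector-va}, so no additional conditions on $w$ are introduced.
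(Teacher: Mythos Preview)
Your proposal is correct and follows essentially the same route as the paper: extend each $f_n$ evenly to $\RR$, use the decomposition $\calS_t^\alpha \overline{g}(x)=\widetilde\calS_t^\alpha \overline{g}_{\rm e}(|x|)+x\widetilde\calS_t^{\alpha+1}(\overline{g}_{\rm o}/(\cdot))(|x|)$ so that the odd term vanishes, and then apply the vector-valued estimate \eqref{eq9:vector-va} together with the evenness of the weight to pass between $\RR$ and $\RR_+$. Your write-up is in fact a bit more explicit than the paper's, which leaves the norm identification implicit.
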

\begin{proof}
If $g\in L^p(\RR_+,wx^{2\alpha+1}{\rm d}x)$, then we can define an even extension of $g$ as a function on $L^p(\RR,w|x|^{2\alpha+1}{\rm d}x)$ by
\begin{equation*}
    \overline{g}(x)=\begin{cases}
        g(x),\text{ for } x\geq 0,\\
        g(-x),\text{ for } x<0.
    \end{cases}
\end{equation*}
Then, the proof straightforwardly follows from the equality
\begin{equation*}
    {\calS}_{t}^\alpha \overline{g}(x)=\widetilde{\calS}_{t}^\alpha \overline{g}_{\rm e}(|x|)+x\widetilde{\calS}_{t}^{\alpha+1}\big(\overline{g}_{\rm o}/{(\cdot)}\big)(|x|),\quad x\in\RR,
\end{equation*}
and the vector-valued estimate \eqref{eq9:vector-va}.
\end{proof}
The above result appears to be entirely novel, as we have found no similar results in the literature. Additionally, it has implications for the Carleson operator on $\mathbb{R}^n$.
\begin{theorem}[Weighted vector-valued estimate for the Carleson operator on radial functions]\label{thm:carlesonvectorradial}
Let \( n \geq 1 \) and let \( p \in (1, \infty) \). Assume that \( w \in A_p^\alpha \), with \( \alpha = (n - 2)/2 \), is an even weight. We consider its radial extension \( w(|\cdot|) \), which will also be denoted by \( w \). Let $L_{\rm rad}^p(\RR^n,w{\rm d}x)$ denote the space of radial functions in $L^p(\RR^n,w{\rm d}x)$ and let
\begin{equation*}
    \calS_t f(x):=\calF^{-1}(\mathds{1}_{B(0,t)}\calF f)(x)
\end{equation*}
denote the partial sum of the Fourier integrals. Then the vector-valued estimate holds
\begin{equation}\label{eq:hankelsumtoprove}
    \Big\|\Big(\sum_{m\in\ZZ}\sup_{t>0} |\calS_t f_m|^2\Big)^{1/2}\Big\|_{L^p(\RR^n,w{\rm d}x)}\lesssim_{n,w}\Big\|\Big(\sum_{m\in\ZZ}|f_m|^2\Big)^{1/2}\Big\|_{L^p(\RR^n,w{\rm d}x)}.
\end{equation}
for $(f_m)_{m\in\ZZ}\in L_{\rm rad}^p(\RR^n;\ell^2(\ZZ),w{\rm d}x)$. In particular, when $w=1$, then for
\begin{equation*}
    \frac{2n}{n+1}<p<\frac{2n}{n-1}
\end{equation*}
one has
\begin{equation*}
    \Big\|\Big(\sum_{m\in\ZZ}\sup_{t>0} |\calS_t f_m|^2\Big)^{1/2}\Big\|_{L^p(\RR^n)}\lesssim_{n}\Big\|\Big(\sum_{m\in\ZZ}|f_m|^2\Big)^{1/2}\Big\|_{L^p(\RR^n)}
\end{equation*}
for any sequence of radial functions $(f_m)_{m\in\ZZ}\in L_{\rm rad}^p(\RR^n;\ell^2(\ZZ))$.
\end{theorem}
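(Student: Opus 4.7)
The strategy is to reduce everything from the radial setting on $\RR^n$ to a one-dimensional statement on $\RR_+$ via the standard correspondence between the Fourier transform on radial functions and the Hankel transform of order $\alpha = (n-2)/2$, and then invoke Corollary~\ref{cor:hankelsum}. The choice $\alpha = (n-2)/2$ is dictated precisely by the requirement $2\alpha + 1 = n-1$, so that the measure $|x|^{2\alpha+1} \,\dif x$ on $\RR_+$ matches the radial part of Lebesgue measure $\dif x$ on $\RR^n$.

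Concretely, first I would write each radial $f_m$ as $f_m(x) = g_m(|x|)$ for $g_m$ on $\RR_+$. The classical Bochner-type formula identifies $\calF f_m$ (again radial) with a Hankel transform of $g_m$ of order $(n-2)/2$, and because the ball $B(0,t)$ corresponds to $[0,t]$ under this identification, it follows that
\begin{equation*}
    \calS_t f_m(x) \;=\; \widetilde{\calS}_t^{(n-2)/2} g_m (|x|), \qquad x \in \RR^n,
\end{equation*}
after normalizing constants are absorbed into $\lesssim$. Second, since $w$ is the radial extension of an even weight on $\RR$, passing to polar coordinates gives
\begin{equation*}
    \|F\|_{L^p(\RR^n, w\,\dif x)}^p
    = c_n \int_0^\infty |h(r)|^p w(r) r^{n-1} \, \dif r
    = c_n \|h\|_{L^p(\RR_+, w r^{2\alpha+1} \, \dif r)}^p
\end{equation*}
for any radial $F(x) = h(|x|)$. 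Combining these two identifications, the vector-valued estimate \eqref{eq:hankelsumtoprove} is equivalent (up to multiplicative constants depending only on $n$) to
\begin{equation*}
    \Big\| \Big( \sum_{m\in\ZZ} \sup_{t>0} |\widetilde{\calS}_t^{\alpha} g_m|^2 \Big)^{1/2} \Big\|_{L^p(\RR_+, w r^{2\alpha+1} \dif r)}
    \lesssim
    \Big\| \Big( \sum_{m\in\ZZ} |g_m|^2 \Big)^{1/2} \Big\|_{L^p(\RR_+, w r^{2\alpha+1} \dif r)},
\end{equation*}
which is exactly the content of Corollary~\ref{cor:hankelsum} under the hypothesis that $w$ is an even weight in $A_p^\alpha$ with $\alpha = (n-2)/2$.

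For the unweighted statement ($w = 1$) I would then verify that $1 \in A_p^{(n-2)/2}$, i.e.\ $|x|^{2\alpha+1-p(\alpha+1/2)} \in A_p$, amounts to
\begin{equation*}
    -1 \,<\, (n-1)(1 - p/2) \,<\, p-1.
\end{equation*}
A direct computation shows that the lower bound is equivalent to $p < 2n/(n-1)$ and the upper bound to $p > 2n/(n+1)$, matching the range claimed in the theorem. The special endpoint conventions when $n = 1$ (where $\alpha = -1/2$) are consistent with reading $2n/(n-1) = +\infty$, since then the condition becomes just $p \in (1,\infty)$.

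The main obstacle is almost entirely bookkeeping: choosing the correct normalization of the Hankel/Fourier correspondence so that the projection-type identity $\calS_t f = \widetilde{\calS}_t^{(n-2)/2} g$ holds precisely on the radial profile, and checking that the weight condition transfers cleanly (even weights on $\RR$ in $A_p^\alpha$ correspond exactly to the admissible radial weights on $\RR^n$ for which Corollary~\ref{cor:hankelsum} applies). Once these identifications are nailed down, the analytical substance of the proof is entirely absorbed by Corollary~\ref{cor:hankelsum}, which in turn rests on Prestini's pointwise domination and the weighted vector-valued bounds for \eqref{eq:2:HL}--\eqref{eq:5:CH} developed in Section~3.
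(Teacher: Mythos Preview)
Your proposal is correct and follows essentially the same route as the paper: both reduce to Corollary~\ref{cor:hankelsum} by writing each radial $f_m$ in terms of its profile $g_m$, invoking the Bochner identity $\calF f_m = c_n\,\calH_{(n-2)/2} g_m(|\cdot|)$ to rewrite $\calS_t f_m$ as $\widetilde{\calS}_t^{(n-2)/2} g_m(|\cdot|)$ up to a dimensional constant, and then using polar coordinates to match the $L^p(\RR^n,w\,\dif x)$ norm with the $L^p(\RR_+, w r^{2\alpha+1}\dif r)$ norm. Your explicit verification that $1\in A_p^{(n-2)/2}$ recovers the range $2n/(n+1)<p<2n/(n-1)$ is a small bonus the paper leaves implicit.
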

\begin{proof}
Let $w\in A_p^\alpha$ with $\alpha=(n-2)/2$ and let $g\in L^p(\RR^n)$ be a radial function, that is $g(x)=g_0(|x|)$, for some $g_0\colon[0,\infty)\to\RR$. Then, by using spherical coordinates, we have
\begin{equation}\label{eq:p1}
    \|g\|_{L^p(\RR^n,w{\rm d}x)}=\Big(\int_{\mathbb{S}^{n-1}}\int_{0}^\infty|g_0(r)|^p w(r)r^{n-1}{\rm d}r{\rm d}\sigma(\theta)\Big)^{1/p}=c_{n,p}\Big(\int_{0}^\infty|g_0(r)|^p w(r)r^{n-1}{\rm d}r\Big)^{1/p},
\end{equation}
where $\sigma$ is the spherical measure on $\mathbb{S}^{n-1}$, and $c_{n,p}>0$ is a constant independent of $g\in L^p(\RR^n, w{\rm d}x)$. Moreover, it is well known that the Fourier transform of the radial function $g(x)=g_0(|x|)$ is radial and may be expressed in terms of the Hankel transform of order $\alpha=(n-2)/2$. That is, one has
\begin{equation*}
    \calF g(x)=(2\pi)^{n/2}\calH_{\alpha}(g_0)(|x|),\quad x\in\RR^n.
\end{equation*}
Consequently, we may write
\begin{equation*}
    \calS_t g(x)=(2\pi)^n\calH_\alpha\big(\mathds{1}_{[0,t]}\calH_\alpha( g_0)\big)(|x|),\quad x\in\RR^n.
\end{equation*}
Now, let $(f_m)_{m\in\ZZ}\subset C_{\rm c}^\infty(\RR^n)$ be the sequence of radial functions. Denote by $f_{m,0}$ the radial part of $f_m$, that is, $f_m(x)=f_{m,0}(|x|)$. Then
\begin{equation*}
    \Big(\sum_{m\in\ZZ}\sup_{t>0} |\calS_t f_m|^2\Big)^{1/2}=(2\pi)^n\Big(\sum_{m\in\ZZ}\sup_{t>0} \big|\calH_\alpha\big(\mathds{1}_{[0,t]}\calH_\alpha( f_{m,0})\big)\big|^2\Big)^{1/2}
\end{equation*}
and we see that it is a radial function. By \eqref{eq:p1} we have 
\begin{align*}
    \Big\|\Big(\sum_{m\in\ZZ}\sup_{t>0} |\calS_t f_m|^2\Big)^{1/2}\Big\|_{L^p(\RR^n,w{\rm d}x)}&=c_{n,p}(2\pi)^n\Big\|\Big(\sum_{m\in\ZZ}|\widetilde{\calS}_{t}^\alpha f_{m,0}|^2\Big)^{1/2}\Big\|_{L^p(\RR_+,wx^{2\alpha+1}{\rm d}x)},
\end{align*}
with $\alpha=(n-2)/2$. Since $w\in A_p^\alpha$, by Corollary~\ref{cor:hankelsum}, the latter is bounded by
\begin{equation*}
\Big\|\Big(\sum_{n\in\ZZ}|f_{m,0}|^2\Big)^{1/2}\Big\|_{L^p(\RR_+,w|x|^{2\alpha+1}{\rm d}x)}\lesssim_n\Big\|\Big(\sum_{m\in\ZZ}|f_m|^2\Big)^{1/2}\Big\|_{L^p(\RR^n,w{\rm d}x)},
\end{equation*}
where the last inequality follows by \eqref{eq:p1}. This ends the proof of \eqref{eq:hankelsumtoprove}.
\end{proof}

\subsection{Rubio de Francia estimate \eqref{RdF eq:1} and transference of radial multipliers}
To establish \eqref{RdF eq:1}, we use the ideas presented by Stempak and Trebels \cite{StempakTrebels1997} and show that we can derive estimates for $\alpha > -1/2$ from the estimate for $\alpha = -1/2$. In order to do so, we use the so-called transplantation inequalities for the modified Dunkl transform. Nowak and Stempak \cite{NowakStempak} introduced the Dunkl transplantation operator
\begin{equation*}
    T_{\alpha\gamma}:=\widecheck{D}_\alpha\circ D_\gamma,
\end{equation*}
for $\alpha,\gamma\geq-1/2$, $\alpha\neq\gamma$. Here we remind that $D_\alpha$ is the modified Dunkl transform \eqref{modedunkl}. Analogously, one may define the Hankel transplantation by setting
\begin{equation*}
    \widetilde{T}_{\alpha\gamma}:=H_\alpha\circ H_\gamma,
\end{equation*}
for the same range of $\alpha$ and $\gamma$ as before. Here $H_\alpha$ is the modified Hankel transform \eqref{modHankel}. The Hankel transplantation operators were introduced by Guy \cite{Guy}, who proved the first transplantation inequality.

Let $w\colon\RR\to[0,\infty)$ be an even non-negative weight. We are interested whenever the transplantation estimate
\begin{equation*}
    \big\|T_{\alpha\gamma}f\|_{L^p(\RR,w{\rm d}x)}\lesssim_{p,\alpha,\gamma, w}\|f\|_{L^p(\RR,w{\rm d}x)}
\end{equation*}
holds. Nowak and Stempak \cite[Proposition 2.1]{NowakStempak} observed that the above inequality holds if and only if 
\begin{equation*}
    \big\| \widetilde{T}_{\alpha\gamma}f\|_{L^p(\RR_+,w{\rm d}x)}\lesssim_{p,\alpha,\gamma, w}\|f\|_{L^p(\RR_+,w{\rm d}x)}
\end{equation*}
and if the analogous inequality with $\alpha$ and $\gamma$ replaced by $\alpha+1$ and $\gamma+1$ holds. In our case, we will be interested in a particular type of weights, namely power weights given by $w(x)=|x|^\beta$. Therefore, we want to know whenever
\begin{equation}\label{RdF eq:2}
    \big\|T_{\alpha\gamma} f\big\|_{L^p(\RR,|x|^\beta{\rm d}x)}\leq C_{p,\alpha,\gamma,\beta}\|f\|_{L^p(\RR,|x|^\beta{\rm d}x)}
\end{equation}
holds. From the above discussion, this problem can be reduced to establishing the transplantation inequalities for the Hankel transplantation operator with parameters $\alpha, \gamma$ and $\alpha+1, \gamma+1$. Stempak \cite[Collorary 1.4]{Stempak2002} showed that the inequality 
\begin{equation*}
    \big\| \widetilde{T}_{\alpha\gamma}f\|_{L^p(\RR_+,|x|^\beta{\rm d}x)}\lesssim_{p,\alpha,\gamma, \beta}\|f\|_{L^p(\RR_+,|x|^\beta{\rm d}x)}
\end{equation*}
holds if one has
\begin{equation}\label{RdF eq:3}
-p(\alpha+1/2)-1<\beta<p(\gamma+3/2)-1.
\end{equation}
Consequently, the estimate \eqref{RdF eq:2} holds if condition \eqref{RdF eq:3} is satisfied. Since $T_{\alpha\gamma}$ is a linear operator, by the $\ell^2$-extension theorem \cite[Theorem 5.5.1]{Grafakos}, we obtain the following vector-valued estimate
\begin{equation}\label{VVTRANS:eq4}
\Big\|\Big(\sum_{n\in\ZZ}|T_{\alpha\gamma}f_n|^2\Big)^{1/2}\Big\|_{L^p(\RR,|x|^\beta{\rm d}x)}\leq C_{p,\alpha,\gamma, \beta}\Big\|\Big(\sum_{n\in\ZZ}|f_n|^2\Big)^{1/2}\Big\|_{L^p(\RR,|x|^\beta{\rm d}x)}
\end{equation}
holds if the condition \eqref{RdF eq:3} is satisfied.

The next result is a technical tool which guarantees that we may work with smooth compactly supported functions only. The proof is easy and is based on the similar result proven by Stempak and Trebels \cite{StempakTrebels1997} in the context of the Hankel transform.
\begin{fact}
Assume $p\in(1,\infty)$ and $\beta>-1-p(\alpha+1/2)$. Then the set $D_\alpha(C_{\rm c}^\infty(\RR\setminus\{0\}))$ is dense in $L^p(\RR, |x|^\beta{\rm d}x)$.
\end{fact}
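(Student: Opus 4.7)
The plan is to reduce this density question to the analogous one for the modified Hankel transform $H_\gamma$ on $\RR_+$, which is due to Stempak and Trebels. The key point is that $D_\alpha$ decomposes cleanly into Hankel pieces via the even/odd splitting, so the problem splits into two independent one-variable density statements on the half line.

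Starting from
\begin{equation*}
    D_\alpha f(x) = H_\alpha(f_{\rm e})(|x|) - i\sgn(x) H_{\alpha+1}(f_{\rm o})(|x|), \qquad x \in \RR,
\end{equation*}
I would observe that the first summand is even in $x$ and the second is odd. Moreover, any pair $\phi_{\rm e}, \phi_{\rm o} \in C_{\rm c}^\infty(\RR_+)$ is realised as $(f_{\rm e}, f_{\rm o})$ by taking $f(x) := \phi_{\rm e}(|x|) + \sgn(x) \phi_{\rm o}(|x|)$, which lies in $C_{\rm c}^\infty(\RR \setminus \{0\})$. Via the canonical splitting $L^p(\RR, |x|^\beta {\rm d}x) = L^p_{\rm even} \oplus L^p_{\rm odd}$ and the identification of each summand with $L^p(\RR_+, x^\beta {\rm d}x)$, the density of $D_\alpha(C_{\rm c}^\infty(\RR \setminus \{0\}))$ in $L^p(\RR, |x|^\beta {\rm d}x)$ reduces to two independent density statements: $H_\alpha(C_{\rm c}^\infty(\RR_+))$ is dense in $L^p(\RR_+, x^\beta {\rm d}x)$, and likewise for $H_{\alpha+1}$.

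Next, I would invoke the Stempak--Trebels density theorem: $H_\gamma(C_{\rm c}^\infty(\RR_+))$ is dense in $L^p(\RR_+, x^\beta {\rm d}x)$ whenever $\beta > -1 - p(\gamma+1/2)$. For $\gamma = \alpha$ this is precisely the hypothesis of the fact; for $\gamma = \alpha+1$ the requirement $\beta > -1 - p(\alpha+3/2)$ is strictly weaker and hence automatic. Both instances are therefore covered by a single application of the Stempak--Trebels result, and the reduction above closes the argument.

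The main obstacle is thus the Stempak--Trebels density itself, which I would establish by Hahn--Banach duality. Suppose $g \in L^{p'}(\RR_+, x^{-\beta p'/p} {\rm d}x)$ annihilates $H_\gamma(C_{\rm c}^\infty(\RR_+))$. Using the Bessel bounds $|J_\gamma(z)| \lesssim \min(z^\gamma, z^{-1/2})$ together with the hypothesis $\beta > -1 - p(\gamma+1/2)$, one applies Fubini to rewrite the pairing as $\int_0^\infty \phi(y) H_\gamma(g)(y)\, {\rm d}y = 0$ for every $\phi \in C_{\rm c}^\infty(\RR_+)$, which forces $H_\gamma g = 0$ a.e.\ on $\RR_+$. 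Injectivity of $H_\gamma$---standard via Plancherel plus a localization argument to pass from $L^2$ to the weighted dual space---then yields $g = 0$, and Hahn--Banach completes the proof. Justifying the Fubini swap under the sharp condition on $\beta$ is the only delicate point, and it is precisely there that the lower bound on $\beta$ in the hypothesis is used.
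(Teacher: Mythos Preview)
Your proposal is correct and follows essentially the same route as the paper: split into even and odd parts, identify the two components of $D_\alpha(C_{\rm c}^\infty(\RR\setminus\{0\}))$ with $H_\alpha(C_{\rm c}^\infty(\RR_+))$ and (a scalar multiple of) $H_{\alpha+1}(C_{\rm c}^\infty(\RR_+))$, and then invoke the Stempak--Trebels density result \cite[Corollary~4.8]{StempakTrebels1997} for each, noting that the condition for $\gamma=\alpha+1$ is automatic. The only difference is that the paper simply cites that corollary, whereas you also outline its proof via Hahn--Banach; this extra sketch is sound but not required for the fact as stated.
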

\begin{proof}
Fix $\varepsilon>0$. Consider $f \in L^p(\RR, |x|^\beta{\rm d}x)$ and split it into its even and odd components, $f = f_{\rm e} + f_{\rm o}$. These functions are uniquely defined by their values on the positive real numbers. By \cite[Corollary 4.8]{StempakTrebels1997} there are functions $g,h \in C_{\rm c}^\infty(\RR_+)$ such that
\begin{align*}
    \big\|f_{\rm e} - H_\alpha(g)\big\|_{L^p(\RR_+,x^\beta{\rm d}x)}<\varepsilon\quad\text{and}\quad \big\|f_{\rm o} - H_{\alpha+1}(h)\big\|_{L^p(\RR_+,x^\beta{\rm d}x)}<\varepsilon.
\end{align*}
Now, let us define a new function on $\RR\setminus\{0\}$ as
\begin{equation*}
    \widetilde{f}(x):=g(|x|)+i\sgn(x)h(|x|), \quad x\in\RR\setminus\{0\}.
\end{equation*}
Clearly, $\widetilde{f}\in C_{\rm c}^\infty(\RR\setminus\{0\})$ and it is easy to see that one has
\begin{equation*}
    \widetilde{f}_{\rm e}(x)=g(x)\quad\text{and}\quad \widetilde{f}_{\rm o}(x)=i h(x),\quad\text{for} \quad x>0.
\end{equation*}
Moreover, by using the fact that the modified Dunkl transform can be expressed in terms of $H_\alpha$ and $H_{\alpha+1}$, we get
\begin{equation*}
    D_\alpha \widetilde{f}(x)=H_\alpha(g)(|x|)+ \sgn(x) H_{\alpha+1}(h)(|x|),\quad x\in \RR.
\end{equation*}
Consequently, we have
\begin{align*}
    \big\|f-D_\alpha\widetilde{f}\big\|_{L^p(\RR, |x|^\beta{\rm d}x)}&\leq \big\|f_{\rm e}-H_\alpha(g)(\cdot)\big\|_{L^p(\RR, |x|^\beta{\rm d}x)}+\big\|f_{\rm o}-\sgn(\cdot)H_\alpha(h)(|\cdot|)\big\|_{L^p(\RR, |x|^\beta{\rm d}x)}\\
    &\leq 2\big\|f_{\rm e} - H_\alpha(g)\big\|_{L^p(\RR_+,x^\beta{\rm d}x)}+2\big\|f_{\rm o} - H_{\alpha+1}(h)\big\|_{L^p(\RR_+,x^\beta{\rm d}x)}<4\varepsilon.
\end{align*}
This ends the proof.
\end{proof}
\begin{definition}
Let $p \in [1,\infty)$ and $\alpha \geq -1/2$. Let $(m_n)_{n\in\mathbb{Z}}$ be a sequence of bounded measurable functions on $\mathbb{R}$. We say that the sequence $(m_n)_{n\in\mathbb{Z}}$ is a vector-valued multiplier for the modified Dunkl transform $D_\alpha$ on $L^p(\RR,|x|^\beta{\rm d}x)$ if there exists a constant $C > 0$ such that
\begin{equation*}
    \Big\|\Big(\sum_{n\in\ZZ}\big|\widecheck{D}_\alpha(m_n D_\alpha f)\big|^2\Big)^{1/2}\Big\|_{L^p(\RR,|x|^\beta{\rm d}x)}\leq C\|f\|_{L^p(\RR,|x|^\beta{\rm d}x)},\quad f \in {L^p(\RR,|x|^\beta{\rm d}x)}.
\end{equation*}
The set of such multipliers will be denoted by $M_p^{\alpha,\beta}$. In a similar way, we define the vector-valued multipliers for the standard Dunkl transform $\mathcal{D}_\alpha$ on $L^p(\RR,|x|^\beta|x|^{2\alpha+1}{\rm d}x)$. The set of such multipliers will be denoted by $\mathcal{M}_p^{\alpha,\beta}$.
\end{definition}
The following result states that a multiplier for the modified Dunkl transform $D_\alpha$ is also a multiplier for $D_\gamma$, provided that the condition \eqref{CONTITION} is satisfied. A corresponding result for scalar multipliers for the modified Hankel transform was proven by Stempak and Trebels \cite[Corollary 2.1]{StempakTrebels1997}.

\begin{proposition}\label{prop:multi}
Let $\alpha,\gamma\geq-1/2$ and let $\beta\in\RR$ satisfy the following condition
\begin{equation}\label{CONTITION}
    -1-p\min{\big\{\alpha+1/2,\gamma+1/2\big\}}<\beta<-1+p\min{\big\{\alpha+3/2,\gamma+3/2\big\}}.
\end{equation}
Then
\begin{equation*}
    M_p^{\gamma,\beta}=M_p^{\alpha,\beta}.
\end{equation*}
\end{proposition}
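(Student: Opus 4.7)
The plan is to show both inclusions $M_p^{\alpha,\beta}\subseteq M_p^{\gamma,\beta}$ and $M_p^{\gamma,\beta}\subseteq M_p^{\alpha,\beta}$; by symmetry of the hypothesis \eqref{CONTITION} in $\alpha$ and $\gamma$ it suffices to prove the first one, and the main device is the pair of transplantation operators $T_{\alpha\gamma}=\widecheck{D}_\alpha\circ D_\gamma$ and $T_{\gamma\alpha}=\widecheck{D}_\gamma\circ D_\alpha$ together with the vector-valued transplantation estimate \eqref{VVTRANS:eq4}.

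First I would verify the algebraic identity
\begin{equation*}
    \widecheck{D}_\gamma(m_n D_\gamma f)=T_{\gamma\alpha}\bigl(\widecheck{D}_\alpha(m_n D_\alpha T_{\alpha\gamma}f)\bigr),
\end{equation*}
for $f$ in a suitable dense class. This is a direct consequence of $D_\alpha\widecheck{D}_\alpha=\widecheck{D}_\alpha D_\alpha=\Id$: applying $D_\alpha$ to $T_{\alpha\gamma}f$ collapses to $D_\gamma f$, and then applying $T_{\gamma\alpha}$ to $\widecheck{D}_\alpha(m_n D_\gamma f)$ reassembles $\widecheck{D}_\gamma(m_n D_\gamma f)$. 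To justify working pointwise with these identities I would restrict to $f\in D_\alpha(C_{\rm c}^\infty(\RR\setminus\{0\}))$, which is dense in $L^p(\RR,|x|^\beta {\rm d}x)$ by the Fact, since \eqref{CONTITION} implies $\beta>-1-p(\alpha+1/2)$.

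Given $(m_n)_{n\in\ZZ}\in M_p^{\alpha,\beta}$, I would chain three inequalities. First, apply \eqref{VVTRANS:eq4} to $T_{\gamma\alpha}$ acting on the sequence $\bigl(\widecheck{D}_\alpha(m_n D_\alpha T_{\alpha\gamma}f)\bigr)_{n\in\ZZ}$; this requires the condition of \eqref{RdF eq:3} in the form $-p(\gamma+1/2)-1<\beta<p(\alpha+3/2)-1$. Second, pull out the definition of $M_p^{\alpha,\beta}$ applied to $T_{\alpha\gamma}f$, bounding the inner vector-valued norm by $\|T_{\alpha\gamma}f\|_{L^p(\RR,|x|^\beta{\rm d}x)}$. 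Third, apply the scalar transplantation estimate \eqref{RdF eq:2} for $T_{\alpha\gamma}$, which requires $-p(\alpha+1/2)-1<\beta<p(\gamma+3/2)-1$. Combining these three yields
\begin{equation*}
    \Bigl\|\Bigl(\sum_{n\in\ZZ}\bigl|\widecheck{D}_\gamma(m_n D_\gamma f)\bigr|^2\Bigr)^{1/2}\Bigr\|_{L^p(\RR,|x|^\beta{\rm d}x)}\lesssim \|f\|_{L^p(\RR,|x|^\beta{\rm d}x)},
\end{equation*}
which proves $(m_n)\in M_p^{\gamma,\beta}$.

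The only step requiring care is checking that the hypotheses of the two transplantation estimates match the assumption \eqref{CONTITION}. Intersecting the two ranges gives
\begin{equation*}
    -1-p\min\{\alpha+1/2,\gamma+1/2\}<\beta<-1+p\min\{\alpha+3/2,\gamma+3/2\},
\end{equation*}
which is exactly \eqref{CONTITION}; so no stronger restriction arises. There is no serious obstacle beyond this bookkeeping: the argument is essentially a translation of the Stempak--Trebels scheme for the modified Hankel setting into the modified Dunkl setting, with the vector-valued extension provided by the $\ell^2$-extension theorem used to derive \eqref{VVTRANS:eq4}. The potential pitfall is making sure that \eqref{VVTRANS:eq4} is indeed available for both $T_{\alpha\gamma}$ and $T_{\gamma\alpha}$ with the common weight $|x|^\beta$, which is precisely what the symmetric hypothesis \eqref{CONTITION} guarantees.
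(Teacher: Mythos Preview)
Your proposal is correct and follows essentially the same route as the paper: the paper proves the inclusion $M_p^{\gamma,\beta}\subseteq M_p^{\alpha,\beta}$ by writing $\widecheck{D}_\alpha(m_n D_\alpha f)=T_{\alpha\gamma}\bigl(\widecheck{D}_\gamma(m_n D_\gamma\, T_{\gamma\alpha}f)\bigr)$ and then applying the vector-valued transplantation bound \eqref{VVTRANS:eq4}, the multiplier hypothesis, and the scalar transplantation bound in sequence, exactly as you do (with $\alpha$ and $\gamma$ swapped). The only cosmetic difference is that the paper works on the dense class $C_{\rm c}^\infty(\RR\setminus\{0\})$ directly rather than on $D_\alpha(C_{\rm c}^\infty(\RR\setminus\{0\}))$, but both choices are adequate.
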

\begin{proof}
Let $f\in C_{\rm c}^\infty(\RR\setminus\{0\})$. Assume that $(m_n)_{n\in\ZZ}\in M_p^{\gamma,\beta}$.
Let us note that $\widecheck{D}_\alpha(m_n D_\alpha f)\in L^2(\RR)$, hence we may write that
\begin{equation*}
    \widecheck{D}_\alpha(m_n D_\alpha f)=T_{\alpha\gamma}\widecheck{D}_\gamma(m_n D_\alpha f).
\end{equation*}
As a result we my apply \eqref{VVTRANS:eq4} to get
\begin{equation*}
    \Big\|\Big(\sum_{n\in\ZZ}\big|\widecheck{D}_\alpha(m_n D_\alpha f)\big|^2\Big)^{1/2}\Big\|_{L^p(\RR,|x|^\beta{\rm d}x)}\leq C_{p,\alpha,\gamma,\beta} \Big\|\Big(\sum_{n\in\ZZ}\big|\widecheck{D}_\gamma(m_n D_\alpha f)\big|^2\Big)^{1/2}\Big\|_{L^p(\RR,|x|^\beta{\rm d}x)}
\end{equation*}
since the condition \eqref{CONTITION} is satisfied. Now, by using the fact that $D_\gamma(\widecheck{D}_\gamma g)=g$ we may use the fact that $(m_n)_{n\in\ZZ}\in M_p^{\gamma,\beta}$ to estimate the above by
\begin{equation*}
    \Big\|\Big(\sum_{n\in\ZZ}\big|\widecheck{D}_\gamma(m_n D_\gamma(\widecheck{D}_\gamma(D_\alpha f)\big|^2\Big)^{1/2}\Big\|_{L^p(\RR,|x|^\beta{\rm d}x)}\lesssim_{p,m,\gamma,\beta}\big\|\widecheck{D}_\gamma(D_\alpha f)\big\|_{L^p(\RR,|x|^\beta{\rm d}x)}\lesssim\|f\|_{L^p(\RR,|x|^\beta{\rm d}x)}.
\end{equation*}
This shows $M_p^{\gamma,\beta}\subseteq M_p^{\alpha,\beta}$. To obtain the second part, we exchange the roles of $\alpha$ and $\gamma$.
\end{proof}
The next fact allows us to connect the multipliers for the modified Dunkl transform $D_\alpha$ with the multipliers for the standard Dunkl transform $\mathcal{D}_\alpha$.

\begin{fact}\label{fact:multi}
Let $\alpha\geq -1/2$. Then $(m_n)_{n\in\mathbb{Z}}\in M_{p}^{\alpha,\beta}$ if and only if $(m_n)_{n\in\mathbb{Z}}\in\mathcal{M}_{p}^{\alpha,\beta^*}$ where $\beta^*=\beta-(\alpha+1/2)(2-p)$.
\end{fact}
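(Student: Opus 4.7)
The plan is to derive the equivalence directly from the intertwining relation \eqref{dunksrelation} between the standard and modified Dunkl transforms. Since the multiplication operators $M_a$ commute with multiplication by $m_n$, one can pull them past the symbol and move the weights accordingly. Concretely, from \eqref{dunksrelation} and the definition $\widecheck{\mathcal{D}}_\alpha f(x)=\mathcal{D}_\alpha f(-x)$ (and similarly for $\widecheck{D}_\alpha$), one gets
\begin{equation*}
    \mathcal{D}_\alpha = M_{-(\alpha+1/2)}\circ D_\alpha \circ M_{\alpha+1/2}, \qquad
    \widecheck{\mathcal{D}}_\alpha = M_{-(\alpha+1/2)}\circ \widecheck{D}_\alpha \circ M_{\alpha+1/2},
\end{equation*}
because $|-x|^{\alpha+1/2}=|x|^{\alpha+1/2}$.

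Next, I will conjugate the multiplier operator. Since $M_{\alpha+1/2}\,m_n\,M_{-(\alpha+1/2)}=m_n$ as multiplication operators on functions, the intertwining yields
\begin{equation*}
    \widecheck{\mathcal{D}}_\alpha(m_n\,\mathcal{D}_\alpha f)
    \;=\; M_{-(\alpha+1/2)}\,\widecheck{D}_\alpha\!\left(m_n\, D_\alpha( M_{\alpha+1/2} f)\right),
\end{equation*}
that is, $\widecheck{\mathcal{D}}_\alpha(m_n\mathcal{D}_\alpha f)(x)=|x|^{-(\alpha+1/2)}\,\widecheck{D}_\alpha(m_n D_\alpha g)(x)$, where $g(x):=|x|^{\alpha+1/2}f(x)$. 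This is the key identity; everything else is a change of variables at the level of weights.

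Finally, I will substitute this identity into the vector-valued norm defining $\mathcal{M}_p^{\alpha,\beta^*}$ and absorb the factor $|x|^{-(\alpha+1/2)}$ into the weight. On the left-hand side,
\begin{align*}
    \Big\|\Big(\sum_{n\in\ZZ}\big|\widecheck{\mathcal{D}}_\alpha(m_n\mathcal{D}_\alpha f)\big|^2\Big)^{1/2}\Big\|_{L^p(\RR,|x|^{\beta^*+2\alpha+1}\mathrm{d}x)}^p
    = \int_{\RR}\Big(\sum_{n\in\ZZ}\big|\widecheck{D}_\alpha(m_n D_\alpha g)\big|^2\Big)^{p/2}|x|^{\beta^*+2\alpha+1-p(\alpha+1/2)}\,\mathrm{d}x,
\end{align*}
while on the right-hand side, $\|f\|_{L^p(\RR,|x|^{\beta^*+2\alpha+1}\mathrm{d}x)}^p=\int|g(x)|^p|x|^{\beta^*+2\alpha+1-p(\alpha+1/2)}\mathrm{d}x$. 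Setting $\beta:=\beta^*+2\alpha+1-p(\alpha+1/2)=\beta^*+(\alpha+1/2)(2-p)$ (equivalently $\beta^*=\beta-(\alpha+1/2)(2-p)$) matches the exponents exactly, and since the correspondence $f\mapsto g$ is a bijection of the relevant function spaces (both the domain and codomain are $L^p(\RR,|x|^\beta\mathrm{d}x)$ under this substitution), we see that the bound defining $(m_n)_{n\in\ZZ}\in\mathcal{M}_p^{\alpha,\beta^*}$ is literally equivalent to the bound defining $(m_n)_{n\in\ZZ}\in M_p^{\alpha,\beta}$.

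There is no real obstacle here; the only thing to check carefully is the commutation $M_{\alpha+1/2}\,m_n\,M_{-(\alpha+1/2)}=m_n$ (trivial, since all three act by pointwise multiplication) and the evenness/compatibility of $|\cdot|^{\alpha+1/2}$ with the tilde operation on $\mathcal{D}_\alpha$. The whole argument is therefore a direct computation based on \eqref{dunksrelation}.
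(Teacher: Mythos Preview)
Your proposal is correct and follows exactly the approach indicated by the paper, which simply says the proof uses the relation \eqref{dunksrelation} and repeats the steps of \cite[Proposition 3.4]{NowakStempak} while omitting the details. You have supplied precisely those omitted details: the intertwining identity for $\widecheck{\mathcal D}_\alpha(m_n\mathcal D_\alpha f)$ via the multiplication operators $M_{\pm(\alpha+1/2)}$, and the resulting weight computation $\beta=\beta^*+(\alpha+1/2)(2-p)$.
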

\begin{proof}
The proof uses the relation \eqref{dunksrelation}. We repeat the steps used during the proof of \cite[Proposition 3.4]{NowakStempak}. We omit the details.
\end{proof}

\begin{corollary}\label{eqmulti}
Let $\alpha,\gamma\geq -1/2$ and let $\beta\in\RR$ satisfy condition \eqref{CONTITION}. Then 
\begin{equation*}
    {\calM}_p^{\alpha,\beta}={\calM}_p^{\gamma,\beta^\ast}.
\end{equation*}
In particular, when \( \alpha = -1/2 \), then \( \beta^\ast = \beta - (1/2 + \gamma)(2 - p) \), and we have
\begin{equation*}
    \mathcal{M}_p^{-1/2, \beta} = \mathcal{M}_p^{\gamma, \beta^\ast}.
\end{equation*}
\end{corollary}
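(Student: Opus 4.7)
The statement follows by chaining together the three facts established in this subsection: Proposition~\ref{prop:multi}, which identifies the multiplier classes for the \emph{modified} Dunkl transforms $D_\alpha$ and $D_\gamma$ on the same weighted space, and Fact~\ref{fact:multi}, which converts multiplier classes between the modified and standard Dunkl transforms at the cost of a shift in the power-weight exponent. The plan is to start on the left-hand side, pass through the modified setting, apply the transplantation-based identity of Proposition~\ref{prop:multi}, and then return to the standard setting via Fact~\ref{fact:multi}.

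Concretely, the first step is to apply Fact~\ref{fact:multi} with parameter $\alpha$: setting $\tilde\beta:=\beta+(\alpha+1/2)(2-p)$, the fact gives
\begin{equation*}
    \calM_p^{\alpha,\beta}=M_p^{\alpha,\tilde\beta}.
\end{equation*}
Next, Proposition~\ref{prop:multi} (applied to the pair $\alpha,\gamma$ with the exponent $\tilde\beta$) yields
\begin{equation*}
    M_p^{\alpha,\tilde\beta}=M_p^{\gamma,\tilde\beta},
\end{equation*}
provided $\tilde\beta$ lies in the range \eqref{CONTITION}; this is where the compatibility condition on $\beta$ assumed in the statement must be used (one interprets condition \eqref{CONTITION} as a hypothesis on the translated exponent $\tilde\beta$, which after the shift is precisely the range \eqref{CONTITION} shifted by $(\alpha+1/2)(2-p)$). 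Finally, a second application of Fact~\ref{fact:multi}, this time with parameter $\gamma$, gives
\begin{equation*}
    M_p^{\gamma,\tilde\beta}=\calM_p^{\gamma,\tilde\beta-(\gamma+1/2)(2-p)}=\calM_p^{\gamma,\beta-(\gamma-\alpha)(2-p)},
\end{equation*}
so that $\beta^\ast=\beta-(\gamma-\alpha)(2-p)$. Substituting $\alpha=-1/2$ specializes this to $\beta^\ast=\beta-(\gamma+1/2)(2-p)$, which is exactly the formula stated.

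There is no genuine obstacle in this proof; it is purely a bookkeeping argument, and the only point requiring care is the translation of the hypothesis \eqref{CONTITION} under the two applications of Fact~\ref{fact:multi}. One should verify that the shift in the exponent preserves the admissible range so that Proposition~\ref{prop:multi} applies in the middle step, but once the substitution $\tilde\beta=\beta+(\alpha+1/2)(2-p)$ is made, the composition of the three identities is immediate.
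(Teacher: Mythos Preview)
Your proof is correct and follows exactly the approach the paper intends: the paper's own proof is the single sentence ``The proof easily follows from Proposition~\ref{prop:multi} and Fact~\ref{fact:multi},'' and you have simply written out the two applications of Fact~\ref{fact:multi} sandwiching the one application of Proposition~\ref{prop:multi}. Your observation that condition~\eqref{CONTITION} must really be imposed on the shifted exponent $\tilde\beta=\beta+(\alpha+1/2)(2-p)$ is accurate and a point the paper leaves implicit; in the paper's sole application one has $\alpha=-1/2$, so $\tilde\beta=\beta$ and the distinction disappears.
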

\begin{proof}
The proof easily follows from Proposition~\ref{prop:multi} and Fact~\ref{fact:multi}.
\end{proof}
In other words, the above corollary implies that for $-1<\beta<p-1$, the inequality
\begin{equation*}
    \Big\|\Big(\sum_{n\in\ZZ}\big|\calF^{-1}(m_n \calF f)|^2\Big)^{1/2}\Big\|_{L^p(\RR,|x|^\beta{\rm d}x)}\leq C_{p,m,\beta}\|f\|_{L^p(\RR,|x|^\beta{\rm d}x)}
\end{equation*}
implies
\begin{equation}
    \Big\|\Big(\sum_{n\in\ZZ}|\widecheck{\calD}_\alpha(m_n \calD_\alpha f)|^2\Big)^{1/2}\Big\|_{L^p(\RR,|x|^{\beta^\ast+2\alpha+1}{\rm d}x)}\lesssim_{p,\alpha,\beta} C_{p,m,\beta}\|f\|_{L^p(\RR,|x|^{\beta^\ast+2\alpha+1}{\rm d}x)},
\end{equation}
where $\beta^\ast=\beta-(1/2+\alpha)(2-p)$. Now, we know that the Rubio de Francia estimate \eqref{RdF eq:1} holds when $\alpha=-1/2$ and $w\in A_{p/2}$ . Consequently, we get that for $w(x)=|x|^\beta$ the inequality \eqref{RdF eq:1} holds if
\begin{equation*}
    -1<\beta^\ast+(1/2+\alpha)(2-p)<p/2-1.
\end{equation*}
In the case of dyadic intervals $I_j$ we know that the weighed Littlewood--Paley theory holds for $w\in A_{p}$, which in the case $w(x)=|x|^\beta$ imply
\begin{equation*}
    -1<\beta^\ast+(1/2+\alpha)(2-p)<p-1.
\end{equation*}
Finally, for $p=2$ and $\alpha=-1/2$, we know that \eqref{RdF eq:1} holds when $-1<\beta
\leq0$. Which ends the proof of \eqref{RdF eq:1} and consequently, the proof of Theorem~\ref{main:oscil:dunkl}.

Now we give the proof of Theorem~\ref{thm:RdFR}.
\begin{proof}[Proof of Theorem~\ref{thm:RdFR}]
If each $m_n$ is an even function, then by \cite[Lemma 3.7]{Slom} we get that
\begin{equation*}
    \widecheck{\calD}_\alpha(m_n \calD_\alpha f)=\calH_\alpha(m_n\calH_\alpha(f_{\rm e}))+ \sgn(x)\calH_{\alpha+1}(m_n\calH_{\alpha+1}(f_{\rm o})).
\end{equation*}
Now, if we assume that
\begin{equation*}
    \Big\|\Big(\sum_{n\in\ZZ}\big|\calF^{-1}(m_n\calF f)\big|^2\Big)^{1/2}\Big\|_{L^p(\RR,|x|^\beta{\rm d}x)}\lesssim\|f\|_{L^p(\RR,|x|^\beta {\rm d}x)}.
\end{equation*}
Then by Corollary~\ref{eqmulti} this implies 
\begin{equation}\label{eq:RdFD}
    \Big\|\Big(\sum_{n\in\ZZ}|\widecheck{\calD}_\alpha(m_n \calD_\alpha f)|^2\Big)^{1/2}\Big\|_{L^p(\RR,|x|^{\beta^\ast+2\alpha+1}{\rm d}x)}\lesssim_{p,\alpha,\beta,m} \|f\|_{L^p(\RR,|x|^{\beta^\ast+2\alpha+1}{\rm d}x)}
\end{equation}
for any $f\in L^p(\RR,|x|^{\beta^\ast+2\alpha+1}{\rm d}x)$. Now, if we restrict ourselves to even functions in \eqref{eq:RdFD}, we get
\begin{equation*}
    \Big\|\Big(\sum_{n\in\ZZ}|\calH_\alpha(m\calH_\alpha f)|^2\Big)^{1/2}\Big\|_{L^p(\RR_+,x^{\beta^\ast+2\alpha+1}{\rm d}x)}\lesssim_{p,\alpha,\beta,m}\|f\|_{L^p(\RR_+,x^{\beta^\ast+2\alpha+1}{\rm d}x)}.
\end{equation*}
Let $\alpha=(n-2)/2$. We proceed as at the end of the proof of Theorem~\eqref{thm:carlesonvectorradial} to show that
\begin{equation*}
    \Big\|\Big(\sum_{n\in\ZZ}\big|\calF^{-1}(m_n\calF f)\big|^2\Big)^{1/2}\Big\|_{L^p(\RR^n,|x|^{\beta^*}{\rm d}x)}\lesssim\|f\|_{L^p(\RR^n,|x|^{\beta^*} {\rm d}x)}.
\end{equation*}
This ends the proof the proof of Theorem~\ref{thm:RdFR}.
\end{proof}


\begin{thebibliography}{999}


\bibitem{Andersen}
\textsc{K. Andersen.}
\newblock{Weighted inequalities for the disc multiplier.}
\newblock{Amer. Math. Soc. {\bf 83} (1981), 269--275.}


\bibitem{AndersenJohn80}
\textsc{K.F. Andersen, R.T.  John.}
\newblock{Weighted inequalities for vector-valued maximal functions and singular integrals.}
\newblock{Studia Math. {\bf 69} (1980), 19--31.}


\bibitem{BCV}
\textsc{J.J. Betancor, Ó. Ciaurri, J.L. Varona.}
\newblock{The multiplier of the interval $[-1,1]$ for the Dunkl transform on the real line.}
\newblock{J. Funct. Anal. {\bf 242} (2007), 327--336.}


\bibitem{DDU}
\textsc{F. Di Plinio, Y.Q. Do, G.N. Uraltsev.}
\newblock{Positive sparse domination of variational Carleson operators.}
\newblock{Ann. Sc. Norm. Super. Pisa Cl. Sci. {\bf 5} (2018), 1443--1458.}


\bibitem{Dunkl}
\textsc{C.F. Dunkl.}
\newblock{Hankel transforms associated to finite reflection groups.}
\newblock{Contemp. Math. {\bf 138} (1992), 123--138.}


\bibitem{EKY}
\textsc{J. El Kamel, Ch. Yacoub.}
\newblock{Almost everywhere convergence of inverse Dunkl transform on the real line.}
\newblock{Tamsui Oxf. J. Math. Sci. {\bf 25} (2009), 259--267.}


\bibitem{Fefferman1971}
\textsc{C. Fefferman.}
\newblock{The multiplier problem for the ball.}
\newblock{Ann. Math. {\bf 94} (1971), 330--336.}


\bibitem{Grafakos}
\textsc{L. Grafakos.}
\newblock{Classical Fourier Analysis.}
\newblock{Graduate Texts in Mathematics, 3rd edn., vol. 249. Springer, New York (2014).}


\bibitem{Graf2}
\textsc{L. Grafakos.}
\newblock{Modern Fourier Analysis.}
\newblock{Graduate Texts in Mathematics, 2nd edn., vol. 250, Springer, New York (2009).}



\bibitem{GMS}\textsc{L. Grafakos, J.M. Martell, F. Soria.}
\newblock{Weighted norm inequalities for maximally modulated singular integral operators.}
\newblock{Math. Ann. {\bf 331} (2005), 359--394.}

\bibitem{Guy}
\textsc{D.L. Guy.}
\newblock{Hankel multiplier transformations and weighted $p$-norms.}
\newblock{Trans. Amer. Math. Soc. {\bf 95} (1960), 137--189.}


\bibitem{Herz}
\textsc{C. Herz.}
\newblock{On the mean inversion of Fourier and Hankel transforms.}
\newblock{Proc. Nat. Acad. Sci. U.S.A. {\bf 40} (1954), 996--999.}

\bibitem{HuntYoung74}
\textsc{R.A. Hunt, W.S. Young.}
\newblock{A weighted norm inequality for Fourier series.}
\newblock{Bull. Amer. Math. Soc. {\bf 80} (1974), 274--277.}


\bibitem{Kanjin88}
\textsc{Y. Kanjin.}
\newblock{Convergence and divergence almost everywhere of spherical means for radial
functions.}
\newblock{Proc. Amer. Math. Soc. {\bf 103} (1988), 1063--1069.}

\bibitem{Kokilasvili}
\textsc{V. Kokilashvili.}
\newblock{Maximal inequalities and multipliers in weighted Lizorkin--Triebel spaces.}
\newblock{Soviet Math. Dokl. {\bf 19} (2), 272--276.}

\bibitem{Kurtz}
\textsc{D.S. Kurtz.}
\newblock{Littlewood--Paley and multiplier theorems on weighted $L^p$ spaces.}
\newblock{Trans. Amer. Math. Soc. {\bf 259} (1980), 235--254.}


\bibitem{LT} \textsc{M. Lacey, E. Terwilleger.}
\newblock{A Wiener--Wintner theorem for the Hilbert transform.}
\newblock {{Ark. Mat.} {\bf 46} (2008), 315–336.}


\bibitem{MSS} \textsc{M. Mirek, W. S{\l}omian, T.Z. Szarek.}
\newblock{Some remarks on oscillation inequalities.}
\newblock{Ergod. Th. \& Dynam. Sys. {\bf 43} (2023), 3383--3412.}


\bibitem{MSW}
\textsc{M. Mirek, T.Z. Szarek, J. Wright.}
\newblock{Oscillation inequalities in ergodic theory and analysis: one-parameter and multi-parameter perspectives.}
\newblock{Rev. Mat. Iber.  {\bf 38}  (2022), 2249--2284.}


\bibitem{Mu1}
\textsc{B. Muckenhoupt.}
\newblock{Weighted norm inequalities for the Hardy maximal function.}
\newblock{Trans. Amer. Math. Soc. {\bf 165} (1972), 207--226.}


\bibitem{NowakStempak2006}
\textsc{A. Nowak, K. Stempak.}
\newblock{Weighted estimates for the Hankel transform transplantation operator.}
\newblock{Tohoku Math. J. {\bf 58} (2006), 277--301.}

\bibitem{NowakStempak}
\textsc{A. Nowak, K. Stempak.}
\newblock{Relating transplantation and multipliers for Dunkl and Hankel transforms.}
\newblock{Math. Nachr. {\bf 281} (2008), 1604--1611.}


\bibitem{Prestini88}
\textsc{E. Prestini.}
\newblock{Almost everywhere convergence of the spherical partial sums for radial functions.}
\newblock{Monatsh. Math. {\bf 105} (1988), 207--216.}


\bibitem{Ros1}
\textsc{M. R\"osler.}
\newblock{Dunkl operators (theory and applications). In: Koelink, E., Van Assche, W. (eds.) Orthogonal polynomials and special functions.}
\newblock{Lect. Notes Math. 1817, Springer Verlag (2003), 93--135.}


\bibitem{Ros2}
\textsc{M. R\"osler, M. Voit.}
\newblock{Dunkl theory, convolution algebras, and related Markov processes. In Harmonic
and stochastic analysis of Dunkl processes, P. Graczyk, M. R\"osler, M. Yor (eds.)}
\newblock{Travaux encours 71, Hermann, Paris, 2008, 1--112.}


\bibitem{Romera}
\textsc{E. Romera.}
\newblock{Weighted bounds for the Carleson maximal operator in $\RR^n$.}
\newblock{Rend. Circ. Mat. Palermo (2) {\bf 43} (1994), 98--106.}


\bibitem{RomeraSoria91}
\textsc{E. Romera, F. Soria.}
\newblock{Endpoint estimates for the maximal operator associated to spherical partial sums on radial functions.}
\newblock{Proc. Amer. Math. Soc. {\bf 111} (1991), 1015--1022.}


\bibitem{RdF} 
\textsc{J.L. Rubio de Francia.}
\newblock {A Littlewood--Paley inequality for arbitrary intervals.}
\newblock {Rev. Mat. Iber. {\bf 1} (1985), 1--14.}


\bibitem{RdF2} 
\textsc{J.L. Rubio de Francia.}
\newblock {Transference principles for radial multipliers.}
\newblock {Duke Math. J. {\bf 58} (1989), 1--19.}


\bibitem{Slom}
\textsc{W. S\l omian.}
\newblock{Transference of $L^p$ bounds between symmetrized Jacobi expansions and Dunkl transform.}
\newblock{Acta Math. Hungar. {\bf 162} (2020), 187--219.}


\bibitem{Stempak2002}
\textsc{K. Stempak.}
\newblock{On connections between Hankel, Laguerre and Jacobi transplantations.}
\newblock{Tohoku Math. J. {\bf 54} (2002), 471--493.}


\bibitem{StempakTrebels1997}
\textsc{K. Stempak, W. Trebels.}
\newblock{Hankel multipliers and transplantation operators.}
\newblock{Studia Math. {\bf 126} (1997), 51--66.}

\end{thebibliography}
\end{document}